\documentclass[amsthm]{elsart}

% Make sure that you include the following two packages.
\usepackage{yjsco}

\usepackage{enumitem} % for parsep, itemsep...
\usepackage{xspace} % for \xspace

\usepackage{amsmath,amssymb} 
     % Better maths support & $more symbols
\usepackage{amsthm}
\usepackage{bm}   
     % Define \bm{} to use bold math fontst
\usepackage{pdfsync}  
     % enable tex source and pdf output synchronicity
\usepackage{subfigure}
\usepackage{color}
\usepackage[english]{babel}

\usepackage[T1]{fontenc}
\usepackage{yfonts}

\renewcommand{\labelenumi}{(\alph{enumi})}  % items as (a) (b) ..

\theoremstyle{plain}
\newtheorem{theorem}{Theorem}[section]
\newtheorem{proposition}[theorem]{Proposition}
\newtheorem{lemma}[theorem]{Lemma}
\newtheorem{corollary}[theorem]{Corollary}

\theoremstyle{definition}
\newtheorem{example}[theorem]{Example}
\newtheorem{definition}[theorem]{Definition}
\newtheorem{remark}[theorem]{Remark}
%\newtheorem{algorithm}[theorem]{Algorithm}

%%strings

%%%\newcommand{\PP}{{\mathfrak P}}

%%%%%%%%%%%%%%%%%%%%%%%%%%%%%%%%%%%%
%                FONTS SWITCHES
%%%%%%%%%%%%%%%%%%%%%%%%%%%%%%%%%%%%

\let\phi=\varphi
\let\rho=\varrho
\let\theta=\vartheta
\let\epsilon=\varepsilon

%%%%%%%%%%%%%%%%%%%%%%%%%%%%%
%   Abbreviations
%%%%%%%%%%%%%%%%%%%%%%%%%%%%%

\def\LT{\mathop{\rm LT}\nolimits}

\def\Supp{\mathop{\rm Supp}\nolimits}
\def\numer{\mathop{\rm numer}\nolimits}
\def\denom{\mathop{\rm denom}\nolimits}

\def\den{\mathop{\rm den}\nolimits}

\def\y1{\mathbf{y}^{(1)}}

\newcommand \ie {\textit{i.e.}}
\newcommand \eg {\textit{e.g.}}
\newcommand \ideal[1] {\langle #1 \rangle}
\newcommand \implicit {\mathop{\rm Implicit}}
\newcommand \monic {\mathop{\rm monic}}

\newcommand \QQ {{\mathbb Q}}

\newcommand \ZZ {{\mathbb Z}}
\newcommand \LPP {\mathop{\rm LPP}\nolimits}
\newcommand \PP {{T}}

\newcommand{\Kt}{{K[t_1,\dots,t_s]}}
\newcommand{\Kx}{{K[x_1,\dots,x_n]}}

\newcommand{\PPList}{\textit{PPL}}
\newcommand{\PhiQB}{\textit{PhiQB}}
\newcommand{\QB}{\textit{QB}}

\newcommand{\compatto}{{enumerative}\xspace}

\def\To{\longrightarrow}
\def\TTo#1{\mathop{\longrightarrow}\limits^{#1}}

\let\To=\longrightarrow
\def\TTo#1{\mathop{\longrightarrow}\limits ^{#1}}

\def\calc{_{\rm calc}}
\def\crt{_{\rm crt}}
\def\hom{^{\rm hom}}
\def\homh{^{{\rm hom}(h)}}
\def\homhD#1{^{{\rm hom}_{#1}(h)}}
\def\homx0{^{{\rm hom}(x_0)}}
\def\deh{{}^{\rm deh}}
\def\dehh{{}^{{\rm deh}(h)}}
\def\dehx0{{}^{{\rm deh}(x_0)}}

\def\longiso{\,\smash{\TTo{\lower 7pt\hbox{$\scriptstyle\sim$}}}\,}

\def\tfrac #1#2{{\textstyle\frac{#1}{#2}}}

\def\cocoa{\mbox{\rm
C\kern-.13em o\kern-.07 em C\kern-.13em o\kern-.15em A}}
\def\apcocoa{\mbox{\rm
A\kern-0.13em p\kern -0.07em C\kern-.13em o\kern-.07 em C\kern-.13em
o\kern-.15em A}}

%%%%%%%%%%%%%%%%%%%%%%%%%%%%%%%%%%%%%
%%%%%%%%%%%%%%%%%%%%%%%%%%%%%%%%%%%%%

\begin{document}

\begin{frontmatter}

\title{Implicitization of Hypersurfaces}

\author{John Abbott}
\address{\scriptsize Dip. di Matematica, 
\ Universit\`a degli Studi di Genova, \ Via
Dodecaneso 35,\
I-16146\ Genova, Italy}
\ead{abbott@dima.unige.it}

\author{Anna Maria Bigatti}
\address{\scriptsize Dip. di Matematica, 
\ Universit\`a degli Studi di Genova, \ Via
Dodecaneso 35,\
I-16146\ Genova, Italy}
\ead{bigatti@dima.unige.it}

\author{Lorenzo Robbiano}
\address{\scriptsize Dip. di Matematica, 
\ Universit\`a degli Studi di Genova, \ Via
Dodecaneso 35,\
I-16146\ Genova, Italy}
\ead{robbiano@dima.unige.it}

\thanks{This research was partly supported by the ``National Group for
Algebraic and Geometric Structures, and their Applications''(GNSAGA --
INDAM)}

\begin{abstract}
  We present new, practical algorithms for the hypersurface
  implicitization problem: namely, given a parametric description (in
  terms of polynomials or rational functions) of the hypersurface,
  find its implicit equation.  Two of them are for polynomial
  parametrizations: one algorithm, ``ElimTH'', has as main step the
  computation of an elimination ideal via a \textit{truncated,
    homogeneous} Gr\"obner basis.  The other algorithm, ``Direct'',
  computes the implicitization directly using an approach inspired by
  the generalized Buchberger-M\"oller algorithm.  Either may be used
  inside the third algorithm, ``RatPar'', to deal with
  parametrizations by rational functions.  Finally we show how these
  algorithms can be used in a modular approach, algorithm
  ``ModImplicit'', for avoiding the high costs of arithmetic with
  rational numbers.  We exhibit experimental timings to show the
  practical efficiency of our new algorithms.
\end{abstract}

\date{\today}

 \begin{keyword}
Hypersurface \sep  Implicitization 
\MSC[2010]{
{13P25, 13P10, 13-04, 14Q10, 68W30}
}
\end{keyword}

\end{frontmatter}

%\tableofcontents

%%%%%%%%%%%%%%%%%%%%%%%%%%%%%%%%
\section{Introduction}
%%%%%%%%%%%%%%%%%%%%%%%%%%%%%%%%

Let $K$ be a field and let $P = K[x_1, \dots, x_n]$ be
a polynomial ring in $n$ indeterminates. Then 
let  $f_1, ..., f_n$ be elements in the field  $L=K(t_1, \dots, t_s)$,
where $\{t_1, \dots, t_s\}$ is another set of indeterminates which are 
viewed as parameters.  We consider the $K$-algebra homomorphism
$$
\phi: K[x_1, ..., x_n] \To K(t_1, \dots, t_s) \hbox{\rm \quad given by\quad } 
x_i \mapsto f_i \hbox{\rm \quad  for\ }  i=1, \dots, n
$$ 
Its kernel, which will be denoted by $\implicit(f_1, \dots, f_n)$, is a prime ideal,
and the general problem of implicitization is to 
find a set of generators for this ideal.

The task of computing $\implicit(f_1, \dots, f_n)$
can be solved by computing a suitable Gr\"obner basis 
(see Proposition~\ref{prop:implicitintersection}).
%with respect to an elimination ordering
%(see for instance book~\cite{KR1}, Section 3.4). 
However, in practice this method 
does not work well since in most non-trivial cases it is far too slow.
The poor computational speed is aggravated when computing with 
rational coefficients (rather than coefficients from a finite field).

There is definitely a big need for new, efficient techniques, and many authors have investigated
alternative ways. The literature about implicitization is so vast that it is almost impossible
to mention the entire body of research on this topic. 
This interest derives from the fact that the parametric representation of a rational variety is 
important for generating points on it, while the
implicit representation is used to check whether a point lies on it.  
Besides its theoretical importance, the double representation of a rational variety 
is used intensively for instance in Computer Aided 
Geometric Design. A~good source of bibliography up to ten years ago is~\cite{BJ}.
More recently,  new ideas have emerged. 
As we said, it is almost impossible to cite all of them, and we content ourselves to mention a few. 
In particular, new methods for computing implicitizations have been 
described in~\cite{BB}, \cite{BC},  \cite{CdA}, and \cite{OR}.
Some of these new ideas respond to the fact that in many cases the computation of
$\implicit(f_1, \dots, f_n)$ is too hard, hence one seeks a way to check whether
a point lies on the rational variety without actually computing its equations.

\smallskip
So, what is the content of this paper? And what are the novelties and 
the new algorithms presented here?
First of all, we concentrate on the ``hypersurface case'' where
$\implicit(f_1, \dots, f_n)$ is a
principal ideal, and hence generated by an irreducible polynomial which is 
therefore unique up to an invertible constant factor.  

\begin{remark}\label{rem:s=n-1}
Let~$I$ be the ideal $\implicit(f_1, \dots, f_n)$
and let~$m$ be the
minimum number of generators for~$I$.
From the facts that
$\dim(K[f_1,\dots,f_n]) \le \dim(K[t_1,\dots,t_s]) = s$
and
$\dim(K[f_1,\dots,f_n]) = \dim(K[x_1,\dots,x_n]/I) \ge n-m$ 
it follows that $m \ge n-s$.
So, whenever $s \le n-1$ then~$I$ has at least one generator,
and in particular~$I$ is non-zero.
\end{remark}

The hypersurface case typically arises when $s = n-1$,
in accordance with the remark above.
However, this is not always the case, as the following examples show. 

\begin{example}\label{ex:bad-parametrizations}
%\begin{example}\label{ex:linewithtwoparams}
We consider the ``atypical'' case where $n=s=2$ and $f_1=f_2=t_1{+}t_2$. 
Clearly we have $\implicit(f_1, f_2)= \ideal{ x-y }$, which is obviously principal.
This does, however, become a typical case if we use
a ``better parametrization'' in terms of $u = t_1{+}t_2$, 
where we then have $f_1 = f_2 =u$, and consequently also have $s = n-1$ with this
better parametrization.
%\end{example}

%\smallskip
Another ``atypical'' example is the following.
%\begin{example}\label{ex:s-over-t}
Let $n=3$, $s=2$ and $f_1 = \frac{t_2^2}{t_1^2}$,\;
$f_2 = \frac{t_1^2+t_2^2}{t_1^2}$,\;
$f_3 = \frac{t_1^2+t_1t_2+t_2^2}{t_1^2}$.
Here we do have $s = n-1$ but the implicitization is not principal, in fact
it turns out that 
$\implicit(f_1, f_2,f_3)=\ideal{ x_1+x_2-1,\; x_2^2-2x_2x_3+x_3^2-x_2-1 }$.
The reason here is that there is a ``better parametrization'' in terms of
$u =\frac{t_2}{t_1}$, where we have
$f_1 = u^2$,\; $f_2 = u^2+1$,\; $f_3 = u^2+u+1$;
and with this better parametrization we have $s \neq n-1$.
\end{example}
 
 In this paper we do not examine the interesting question of finding a
good parametrization, which is a problem of a quite different nature.

\smallskip
\noindent
The ideas explored in this paper can be summarized in the following way:
\begin{itemize}[parsep=1pt]
\item
  Exploit homogenization to improve elimination (RatPar, ElimTH):\\
  using elimination is known to be an elegant but impractical way to
  achieve implicitization.  We show that any problem (polynomial or
  rational parametrization) can be homogenized (see
  Proposition~\ref{prop:homogpoly} and
  Theorem~\ref{th:homogfractions}).  Thereafter, the result is given
  by the first polynomial not involving any of the parameters, so the
  %Gr\"obner basis 
  computation can be stopped as soon as it is found,
  avoiding the remaining ``useless reductions''.

\item Use a direct algorithm which does not need elimination
  (Direct):\\
  Wang in~\cite{W} described an algorithm based on searching for a
  linear relationship among the images of the power-products.  We
  refine this idea and make it incremental, thus leading to several
  important insights and opimizations (see
  Subsection~\ref{sec:direct}).

\item
  When the coefficient field is $\QQ$ use modular methods (ModImplicit):\\
  computing the solution polynomial modulo several primes, and then 
  obtaining the solution over $\QQ$ by
  Chinese Remaindering is a powerful tool, but needs to be fine-tuned to any
  specific context.  In Section~\ref{sec:RationalReconstruction} we use
  an incremental approach combined with fault-tolerant rational reconstruction
  to resolve the problem of how many primes are needed and to ``tolerate''
  computations with bad primes (some of which cannot
  be detected \textit{a priori}).
\end{itemize}

%%%%%%%%%%%%%%%%%%%%%%%%%%%%%%%%%%%
%%%%%%%%%%%%%%%%%%%%%%%%%%%%%%%%%%%
%%% A questo punto inserirei la seguente parte%%%%%%%%%%%

\begin{remark}
Regarding the first item, one of the referees pointed out that 
we made no reference to the \textit{``projective view of the implicitization 
problem, which is relatively classical''}.  The main reason was to avoid complications 
for the typical computer-algebra people who, 
generally, are much more familiar with algebra than geometry. 
Nevertheless, let us give some hints in this direction to the interested reader.

If $f_1,\dots,  f_n \in K[t_1,\dots,t_s]$ then the map~$\phi$ can be 
seen as the algebraic counterpart of the 
map of the affine schemes $\Phi: \mathbb A_s \To \mathbb A_n$. 
We let $d_i = \deg(f_i)$ for $i=1, \dots , n$, then homogenize the $f_i$ 
with a new indeterminate $h$ such that $\deg(h) =1$,  and set ${\deg(x_i) = \deg(f_i)}$.
Now we consider the projective space $\mathbb{P}_s$ with coordinates $t_1, \dots, t_s, h$
and the weighted projective space  $\mathbb{P}(d_1, \dots, d_n, 1)$ with 
coordinates $x_1, \dots, x_n, h$ (see for instance~\cite{BR} for an introduction 
to the theory of weighted projective spaces). 
The map~$\Phi$ can be viewed as the restriction to $\mathbb{A}_s$
of the \textit{rational map}
$\Psi: \mathbb{P}_s \dashrightarrow \mathbb{P}(d_1, \dots, d_n, 1)$
given by $[t_1:t_2:\cdots:t_s:h] \to [f_1\homh:f_2\homh:\cdots:f_s\homh:h]$.
Observe that~$\Psi$ is a rational map, but not necessarily a map, since it may 
have a non-trivial base locus.
The algebraic explanation  of this fact is exactly the proof 
of Proposition~\ref{prop:homogpoly}.

The situation is more complicated when $f_1,\dots,  f_n \in K(t_1,\dots,t_s)$.
Using a common denominator, we may assume that $f_i = \frac{p_i}{q}$ with
$f_i, q \in K[t_1, \dots, t_s]$ for $i =1, \dots, n$.
If we let~$D_q$ denote the open subscheme $\mathbb{A}_s\setminus \{q=0\}$, 
then~$\phi$ can be  seen as the algebraic counterpart of the 
map of affine schemes $\Phi: D_q\To \mathbb A_n$, and the standard way to 
proceed is to take care of this limitation, as explained in Remark~\ref{rem:generalfrac}.

But there is a different way to interpret $\implicit(f_1, \dots, f_s)$.
We let $d_i = \deg(p_i)$ for $i=1, \dots, n$ and $d_0 = \deg(q)$.
Then we let $d = \max\{ \deg(q), \deg(p_1), \dots, \deg(p_n) \}$, let $Q = q\homhD{d}$,
$P_i=p_i\homhD{d}$ for $i=1, \dots, n$ (see Definition~\ref{def:homD}), so that 
all the polynomials $Q, P_1, \dots, P_s$ are homogeneous of the same degree $d$.
Next we consider the projective space $\mathbb{P}_s$ with coordinates $t_1, \dots, t_s, h$
and the projective space $\mathbb{P}_n$ with coordinates $x_0, x_1, \dots, x_n$.
If we let $\mathbb{A}_n$ be the affine open chart of $\mathbb{P}_n$ defined by $x_0\ne 0$,
the map $\Phi$ can be interpreted as the restriction to $D_q$ of the corresponding rational map
$\Psi: \mathbb{P}_s \dashrightarrow \mathbb{P}_n$ defined by 
$[t_1:t_2:\cdots:t_s:h] \to [Q:P_1:\cdots:P_s]$.  The algebraic explanation of this 
fact is exactly the proof of Theorem~\ref{th:homogfractions}. 

Why not try to use other embeddings into suitable 
projective or weighted projective spaces, as we do 
in the case of polynomial parametrizations? The reason is explained in all the remarks 
and examples following Theorem~\ref{th:homogfractions}.

\end{remark}

%%%%%%%%%%%%%%%%%%%%%%%%%%%%%%%%%%%
%%%%%%%%%%%%%%%%%%%%%%%%%%%%%%%%%%%
%%%%%%%%%%%%%%%%%%%%%%%%%%%%%%%%%%%

\medskip
\noindent
There is a further idea: computing implicitizations with constraints,
in particular using a method of ``slicing'' the variety with suitable
parallel hyperplanes.
This technique was introduced and used in~\cite{R}.
However, it is rarely better than our new methods when 
$\implicit(f_1, \dots, f_n)$
is a hypersurface.
We shall investigate the ``implicitizations with constraints'' for the general case
in a later paper.

\medskip
The algorithms described in this paper are implemented in
CoCoALib~\cite{cocoalib},
and 
%can be easily accessed from
are also available in
 \cocoa\,5~\cite{cocoa5}.
With our new methods
most of the examples mentioned in the literature become ``easy'', that
is we can compute the implicitization in less than a second~---~see
Table~$1$ in Section~\ref{Timings}.
Consequently,
%In the last section
 we introduce new, challenging examples,
and the last table shows the performance of our implementation.

\medskip
We thank the referees for their useful comments and suggestions which
helped us to improve this paper.

%%%%%%%%%%%%%%%%%%%%%%%%%%%%%%%%%%%%%
\section{Notation and Terminology}

Here we introduce the notation and terminology we shall use.

\begin{definition}
\label{def:elimideal}
  Let $K$ be a field, and let $P = \Kx$ be a polynomial ring
  in~$n$ indeterminates.
Let $t_1, \dots, t_s$ be further indeterminates which are viewed as
``parameters''.
Given elements $f_1, ..., f_n$ in $\Kt$, we define
the ideal $J = \ideal{ x_1-f_1, \dots, x_n-f_n }$ of the ring 
$P[ t_1, \dots, t_s]$ to be the
\textbf{eliminating ideal} of the $n$-tuple $(f_1, \dots, f_n)$.

We define {\boldmath$\implicit(f_1, \dots, f_n)$} to be
the kernel of the $K$-algebra homomorphism
${\phi: P \To \Kt}$ which sends
$x_i \mapsto f_i \hbox{\rm \quad  for\ }  i=1, \dots, n$.
\end{definition}

\begin{definition}\label{def:elimrat}
We extend naturally Definition~\ref{def:elimideal} to parametrizations by rational functions.
Let $\frac{p_1}{q},\dots, \frac{p_n}{q}$ be rational functions in the field $L=K(t_1, \dots, t_s)$
%, where $\{t_1, \dots, t_s\}$ is a set of indeterminates.  
with common denominator $q$;
so that we have $q, p_1, \dots, p_n \in \Kt$.

We define {\boldmath$\implicit(\frac{p_1}{q},\dots, \frac{p_n}{q})$} to be
  the kernel of the $K$-algebra homomorphism
  ${\phi: P \To L}$ which sends
  $x_i \mapsto \frac{p_i}{q}$ \quad  for $i=1, \dots, n$.
\end{definition}

\begin{definition}
  An \textbf{\compatto ordering} is a total ordering such that for every
  element there are only finitely many elements smaller than it.
%%%on $P=\Kx$ is an ordering $\sigma$
%%%such that for any power-product $\PP \in P$ there are only
%%%finitely many power-products $\sigma$-smaller than $\PP$.
  In particular, an \textbf{\compatto term-ordering} is a term-ordering
  which is also \compatto; consequently, an \compatto term-ordering
  is defined by a matrix with strictly positive entries in the first row.
\end{definition}

\begin{example}
%The ordering \texttt{DegRevLex} 
Any degree-compatible term-ordering is \compatto because for any
%given 
power-product~$\bar\PP$ all smaller power-products, $\PP<\bar\PP$,
must have $\deg(\PP)\le\deg(\bar\PP)$, and so they are finite in number.
%Similarly, any degree-compatible term-ordering is \compatto.
In contrast, the lex-ordering (for 2 or more indeterminates) is not
\compatto because if indeterminate $x_2$ is less than $x_1$ then
all powers $x_2^d$ are smaller than $x_1$.
\end{example}

\begin{definition}\label{def:homD}
%   Let $P$ be a polynomial ring, and $h$ a new indeterminate.
%   For each $D \in \NN$ we define the maps $H_D : P \To P[h]$ as
%   follows.  Let $f \in P$.  If $f=0$ then we define $H_D(f) = 0$
%   for all $D$.  Otherwise write $f = \sum_{i=0}^{\deg(f)} f_i$ as
%   the sum of its homogeneous parts (where $f_i$ represents the
%   homogeneous part of degree~$i$), and define
%   $H_D(f) = h^D \sum_{i=0}^{\deg(f)} \frac{f_i}{h^i}$.

In section~\ref{sec:param-by-ratfns} we shall use two different kinds of homogenization:
\begin{itemize}[topsep=3pt, parsep=1pt]
\item traditional \textbf{homogenization}
 and \textbf{dehomogenization}: with respect to~$h$
  we denote them by the superscripts~$\homh$ and~$\dehh$ respectively;
  and with respect to $x_0$, by the superscripts~$\homx0$ and~$\dehx0$.
\item  \textbf{\boldmath $d$-shifted-homogenization}:
  for a non-zero polynomial~$f$ and degree $d\ge\deg(f)$ we write
  $f\homhD{d}$ to mean 
  $h^{d-\deg(f)} f\homh$, which is a homogeneous polynomial of degree~$d$.
%%   and we call it \textbf{$d$-shifted-homogenization}.
  As a special case, since $0\homh = 0$, we have $0\homhD{d} = 0$ for all~$d$.
\end{itemize}
\end{definition}

The following easy properties of the shifted-homogenization
will help the reader understand the proof of Theorem~\ref{th:homogfractions}
\begin{lemma}
\label{lemma:homogfractions}
  Let $P$ be a polynomial ring over the field~$K$, and let $f,g \in P$.
% be non-zero.
  \begin{itemize}[topsep=3pt, parsep=1pt]
  \item If $d_1 \ge \deg(f)$ and $d_2 \ge \deg(g)$ then
    $f\homhD{d_1} \cdot g\homhD{d_2} = (fg)\homhD{d_1+d_2}$
  \item If $d \ge \deg(f)$ and $d \ge \deg(g)$ then 
    $f\homhD{d} + g\homhD{d} = (f+g)\homhD{d}$
%\item for every $D \in \N$ and every $k \in K$ we have $k \, H_D(f) = H_D(kf)$.
  \end{itemize}
\end{lemma}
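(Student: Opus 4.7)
The plan is to reduce both parts to short computations using the defining identity $f^{\hom_d(h)} = h^{d-\deg(f)} f^{\hom}$, together with the two standard facts about ordinary homogenization: $(fg)^{\hom} = f^{\hom}\cdot g^{\hom}$ and $\deg(fg) = \deg(f) + \deg(g)$ whenever $f,g\ne 0$.

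For the product part, I would first dispose of the degenerate cases: if either $f$ or $g$ is zero, then both sides equal $0$ by the convention $0^{\hom_d(h)} = 0$ recorded in Definition~\ref{def:homD}. Otherwise I would simply expand
\[
  f^{\hom_{d_1}(h)} \cdot g^{\hom_{d_2}(h)}
  \;=\; h^{d_1 - \deg(f)}\, f^{\hom} \cdot h^{d_2 - \deg(g)}\, g^{\hom}
  \;=\; h^{(d_1+d_2) - \deg(fg)}\, (fg)^{\hom},
\]
and recognise the right-hand side as $(fg)^{\hom_{d_1+d_2}(h)}$.

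For the sum part, the cleanest route is to pass through the substitution description of homogenization, namely $f^{\hom} = h^{\deg(f)} f(t_1/h, \dots, t_s/h)$, so that $f^{\hom_d(h)} = h^d\, f(t_1/h, \dots, t_s/h)$ independently of $\deg(f)$ (as long as $d\ge \deg(f)$). Then
\[
  f^{\hom_d(h)} + g^{\hom_d(h)}
  \;=\; h^d\, f(t_1/h,\dots,t_s/h) + h^d\, g(t_1/h,\dots,t_s/h)
  \;=\; h^d\, (f+g)(t_1/h,\dots,t_s/h),
\]
which is precisely $(f+g)^{\hom_d(h)}$ provided $f+g\ne 0$, and which is $0$ on both sides when $f+g = 0$.

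The only minor obstacle is bookkeeping around the degenerate cases: cancellation can make $\deg(f+g)$ strictly smaller than $\max(\deg f,\deg g)$, and either summand or product can vanish. The whole point of the $d$-shifted homogenization is precisely to absorb these drops in degree into extra powers of~$h$, so once the convention $0^{\hom_d(h)} = 0$ is applied where needed, the equalities become purely mechanical. No term-ordering hypotheses, coefficient assumptions, or properties of~$K$ beyond those of a field enter the argument.
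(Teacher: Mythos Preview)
Your argument is correct; the paper itself gives no proof beyond declaring the identities ``elementary exercises in algebra'' and remarking that the convention $0^{\hom_d(h)}=0$ is consistent. Your write-up simply supplies the straightforward computations the paper omits, and your use of the substitution formula $f^{\hom_d(h)}=h^d\,f(t_1/h,\dots,t_s/h)$ for the additive identity is a clean way to sidestep the degree-drop bookkeeping.
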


\begin{proof}
The proofs are elementary exercises in algebra.
Observe that the special definition $0\homhD{d} = 0$ is indeed compatible with this
lemma, since from the equality
$f{-}f = 0$ we deduce the equalities $0 = f\homhD{d} - f\homhD{d} = 0\homhD{d}$.
\end{proof}

%%%%%%%%%%%%%%%%%%%%%%%%%%%%%%%%%%%%%
\section{Polynomial Parametrizations}

In this section we consider a parametrization given by 
polynomials $f_1, \dots, f_n$ in the ring $K[t_1, \dots, t_s]$, where 
$\{t_1, \dots, t_s\}$ is a  set of indeterminates which are viewed as
parameters.  We will look at parametrizations by rational functions
in section~\ref{sec:param-by-ratfns}.

%% \begin{definition}
%% \label{def:elimideal}
%%   Let $K$ be a field, and let $P = K[x_1, \dots, x_n]$ be a polynomial ring
%%   in~$n$ indeterminates.
%% Let $t_1, \dots, t_s$ be further indeterminates.
%% Given elements $f_1, ..., f_n$ in $K[t_1, \dots, t_s]$, we define
%% the ideal $J = \ideal{ x_1-f_1, \dots, x_n-f_n }$ of the ring 
%% $P[ t_1, \dots, t_s]$ to be the
%% \textbf{eliminating ideal} of the $n$-tuple $(f_1, \dots, f_n)$.

%% We define \textbf{Implicit}$(f_1, \dots, f_n)$ to be
%% the kernel of the $K$-algebra homomorphism
%% ${\phi: P \To K[t_1, \dots, t_s]}$ which sends
%% $x_i \mapsto f_i \hbox{\rm \quad  for\ }  i=1, \dots, n$.
%% \end{definition}

\goodbreak
\begin{proposition}
\label{prop:implicitintersection}
In the setting of Definition~\ref{def:elimideal}:
\begin{enumerate}[parsep=0pt]
\item We have $\implicit(f_1, \dots, f_n) = J\cap P$.

\item The ideal $\implicit(f_1, \dots, f_n)$ can be computed
  using an elimination ordering for all the $t_i$.

\item The ideal $\implicit(f_1, \dots, f_n)$ is prime.
\end{enumerate}
\end{proposition}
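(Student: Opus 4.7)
The plan is to prove the three parts in order, with part~(1) doing the main work and parts~(2) and~(3) following easily.

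For part~(1), I would introduce the extended evaluation $\psi\colon P[t_1,\dots,t_s] \to K[t_1,\dots,t_s]$ that sends $x_i \mapsto f_i$ and fixes each $t_j$. By construction $\psi$ restricts to $\phi$ on~$P$, and the image lies in $\Kt$ (not just in the fraction field) because each $f_i$ is a polynomial; hence $\ker(\phi) = \ker(\psi) \cap P$. So the whole point is to verify $\ker(\psi) = J$. The inclusion $J \subseteq \ker(\psi)$ is immediate because $\psi(x_i - f_i) = f_i - f_i = 0$. For the reverse inclusion, given $F \in P[t_1,\dots,t_s]$, I would use the ``substitution trick'': writing $F$ as a $K[t_1,\dots,t_s]$-linear combination of monomials $x^\alpha$, observe that each difference $x^\alpha - f^\alpha$ lies in $J$ (an elementary induction using $x_i^{k+1} - f_i^{k+1} = x_i(x_i^k - f_i^k) + f_i^k(x_i - f_i)$). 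Summing, $F - F(f_1,\dots,f_n,t_1,\dots,t_s) \in J$. If also $F \in \ker(\psi)$, the second term vanishes, so $F \in J$. Combining, $\implicit(f_1,\dots,f_n) = \ker(\psi) \cap P = J \cap P$.

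For part~(2), this is a direct application of the elimination theorem for Gröbner bases: if $\sigma$ is a term-ordering on $P[t_1,\dots,t_s]$ that eliminates $t_1,\dots,t_s$ (so any term containing some $t_j$ is larger than any term in~$P$ alone), and $G$ is a $\sigma$-Gröbner basis of~$J$, then $G \cap P$ is a Gröbner basis of $J \cap P$. Combined with part~(1), this yields generators of $\implicit(f_1,\dots,f_n)$.

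For part~(3), the ring $L = \Kt$ is an integral domain, so the kernel of any ring homomorphism into $L$ is a prime ideal; applying this to $\phi$ finishes the proof.

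The only step that needs genuine care is the inclusion $\ker(\psi) \subseteq J$ in part~(1); everything else is either a direct computation or an invocation of a standard Gröbner basis fact. I do not expect any real obstacle, since the substitution identity $x^\alpha - f^\alpha \in J$ is a routine induction.
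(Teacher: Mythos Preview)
Your proposal is correct. For parts~(a) and~(b) the paper simply declares them ``standard results'' and cites a textbook (Kreuzer--Robbiano, \textit{Computational Commutative Algebra~1}, Section~3.4); your substitution-trick argument for $\ker(\psi)=J$ is precisely the standard proof such a citation points to, so there is no real difference.

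For part~(c) there is a small but genuine difference in route. The paper argues via the isomorphism $K[t_1,\dots,t_s,x_1,\dots,x_n]/J \cong K[t_1,\dots,t_s]$ (which, incidentally, is exactly what your computation $\ker(\psi)=J$ together with surjectivity of~$\psi$ establishes), concludes that~$J$ is prime, and then uses that the contraction $J\cap P$ of a prime ideal is prime. You instead bypass~$J$ entirely and observe directly that $\ker(\phi)$ is prime because the codomain $\Kt$ is a domain. Your route is shorter and needs nothing from part~(a); the paper's route has the minor side benefit of recording that~$J$ itself is prime, a fact used later (e.g.\ in Proposition~3.2). Either is perfectly fine here.
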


\begin{proof}
Claims (a) and (b) are standard results (see for instance book~\cite{KR1}, Section 3.4).
Claim~(c) follows from the
isomorphism $K[t_1, \dots, t_s, x_1, \dots, x_n]/J \cong K[t_1, \dots t_s]$,
whence~$J$ is prime, and so $J\cap P$ is prime too.
\end{proof}

\begin{remark}
\label{rem:exclude-constants}
We shall later find it convenient to assume that in the parametrization
no $x_i$ maps to a constant.  This is not a restriction because if, say, $f_n \in K$
then we obtain the simple decomposition: $\implicit(f_1,f_2,\ldots,f_n) = \ideal{ x_n - f_n } + \implicit(f_1,\ldots,f_{n-1})$.  
Thus any indeterminates $x_i$ which map to constants can simply
be taken out of consideration, letting us concentrate on the interesting part.
Henceforth we shall assume that none of the $f_i$ is constant.
\end{remark}

%%%%%%%%%%%%%%%%%%%%%%%%%%%%%%
%%%%%%%%%%%%%%%%%%%%%%%%%%%%%%
%\subsection{Improve elimination by homogenization}
%\label{sec:homogeneous}
%
The very construction of the eliminating ideal (in Definition~\ref{def:elimideal})
looks intrinsically non-homogeneous.  And it is well-known that the
behaviour of Buchberger's algorithm 
can be quite erratic when the input is not homogeneous:
usually the computation for a non-homogeneous input is a lot slower than
a ``similar''
homogeneous computation (though there are sporadic exceptions);
for instance, see Example~\ref{ex0}.
We now look quickly at how to use homogenization during implicitization.

A first idea is to give weights to the $x_i$ indeterminates by setting
$\deg(x_i) = \deg(f_i)$ for each~$i$.
If we do so, and if the original $f_i$ are homogeneous polynomials, 
then the eliminating ideal~$J$ turns out to be a homogeneous ideal.
Even when the $f_i$ are not all homogeneous, in the process of ordering and choosing the
power-products of a given degree, we may reasonably expect that
Buchberger's algorithm will ``behave''
more similarly to a homogeneous ideal than with the \textit{standard grading},
where all indeterminates have degree~1.

Although this trick improves the computation in most cases, it is not a
miraculous panacea.  Much better ideas come from the following 
Proposition~\ref{prop:homogpoly} and Theorem~\ref{th:homogfractions}
which reduce the computation of the implicitization ideal to the 
case of prime ideals whose generators are homogeneous polynomials.

\smallskip
In the proofs we use the fundamental properties of homogenization 
and dehomogenization as described in~\cite{KR2}, Section 4.3.
A general discussion about the topic treated in the following proposition
can be found in~\cite{KR2}, Tutorial 51.

\smallskip
In the proposition below we use a single
homogenizing indeterminate~$h$; so, to simplify notation, homogenization and dehomogenization
are tacitly taken with respect to~$h$.

\begin{proposition}\label{prop:homogpoly}{\bf (Implicitizating
Polynomial Parametrizations by Homogenization)}\\
Let $P=K[x_1,\dots,x_n]$, 
let $f_1, \dots, f_n \in \Kt \setminus K$.
%%and let~$J$ be the eliminating ideal of $(f_1, \dots, f_n)$. 
Now let~$h$ be a new indeterminate, and let 
$P[t_1, \dots,t_s, h]$ be graded by setting 
$\deg(x_i)=\deg(f_i)$ for $i=1,\dots, n$ 
and $\deg(t_1) =\cdots =\deg(t_s) = \deg(h) = 1$.
Finally let $F_i =f_i\hom$, and
let $\bar{J}$ be the eliminating ideal of $(F_1, \dots, F_n)$.
Then:
\begin{enumerate}
\setlength{\itemsep}{0pt}
\item The ideal $\bar{J}\cap P[h]$ is prime.

\item We have the equality \, $\implicit(f_1, \dots, f_n)= (\bar{J}\cap P[h])\deh$.
\end{enumerate}
\end{proposition}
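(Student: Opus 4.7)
The plan is to organize everything around a single graded $K$-algebra homomorphism. I would equip the ring $P[t_1,\dots,t_s,h]$ with the given weighted grading, where $\deg(x_i)=d_i:=\deg(f_i)$ and $\deg(t_j)=\deg(h)=1$. Since each $F_i = f_i\hom$ is homogeneous of degree $d_i$, the generators $x_i - F_i$ of $\bar J$ are all homogeneous, so $\bar J$ is a homogeneous ideal. Then I would introduce the map $\Phi\colon P[h] \To K[t_1,\dots,t_s,h]$, $\;x_i \mapsto F_i,\ h\mapsto h$, which is graded by construction. The standard identification $P[t_1,\dots,t_s,h]/\bar J \cong K[t_1,\dots,t_s,h]$, obtained by ``solving'' $x_i = F_i$, shows that $\Ker(\Phi) = \bar J \cap P[h]$. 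Since $K[t_1,\dots,t_s,h]$ is an integral domain, this kernel is prime, which settles~(a).

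For~(b) I would prove the two inclusions separately. To show $(\bar J \cap P[h])\deh \subseteq \implicit(f_1,\dots,f_n)$, take any element $g = G\deh$ with $G \in \bar J \cap P[h]$. From $\Phi(G)=0$, substituting $h=1$ in $K[t_1,\dots,t_s,h]$ yields $G(f_1,\dots,f_n,1)=0$; but this substitution factors as first dehomogenizing $G$ with respect to $h$ and then evaluating $x_i\mapsto f_i$, so $g(f_1,\dots,f_n) = G\deh(f_1,\dots,f_n)=0$, i.e.\ $g \in \implicit(f_1,\dots,f_n)$.

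For the reverse inclusion, given $p\in\implicit(f_1,\dots,f_n)$, I would form $p\hom \in P[h]$: a homogeneous element satisfying $(p\hom)\deh = p$ (well defined because $f_i\notin K$ forces $d_i\ge 1$, so the weighted grading is positive). Because $\Phi$ is graded, $\Phi(p\hom)$ is homogeneous in $K[t_1,\dots,t_s,h]$; setting $h=1$ gives $p(f_1,\dots,f_n)=0$. A homogeneous polynomial whose dehomogenization vanishes must itself vanish, so $\Phi(p\hom)=0$, hence $p\hom \in \bar J\cap P[h]$, and dehomogenizing gives $p \in (\bar J \cap P[h])\deh$.

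The one place requiring real care is to keep the two roles of $h$ aligned: the $h$ used to homogenize $p$ inside $P[h]$ must be identified with the $h$ used to homogenize each $f_i$ inside $K[t_1,\dots,t_s,h]$, and one must verify that $\Phi$ really is graded for the given weights. Once these bookkeeping points are in place, everything else reduces to the standard behaviour of (de)homogenization recalled in~\cite{KR2}, Section~4.3.
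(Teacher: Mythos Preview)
Your proof is correct, and for part~(a) it is essentially the same as the paper's: both observe that $\bar J$ is the kernel of a map into the integral domain $K[t_1,\dots,t_s,h]$, hence prime, and then intersect with $P[h]$. Your explicit identification $\bar J\cap P[h]=\Ker(\Phi)$ is a nice touch that sets up part~(b).

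For part~(b) you take a genuinely different route. The paper introduces the \emph{non-homogeneous} eliminating ideal $J=\ideal{x_i-f_i}$, argues from primality that $\bar J$ is $h$-saturated and hence $\bar J=J\hom$, and then compares $(J\cap P)\hom$ with $J\hom\cap P[h]$ using the standard ideal-level machinery of homogenization and dehomogenization. You bypass $J$ entirely: you work elementwise with the graded map $\Phi$, reduce everything to evaluation at $h=1$, and use the single fact that a homogeneous polynomial in $K[t_1,\dots,t_s,h]$ with vanishing dehomogenization must be zero. Your approach is more self-contained and avoids invoking saturation; the paper's approach is shorter on the page because it can appeal directly to the ideal-theoretic results in~\cite{KR2}, Section~4.3, and it makes the relation $\bar J=J\hom$ explicit, which is conceptually useful elsewhere in the paper (e.g.\ for Corollary~\ref{cor:princpoly}).
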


\begin{proof} 
The proof of claim (a) follows immediately from the fact that $\bar{J}$ 
is an eliminating ideal, hence prime.

Let~$J$ be the (non-homogeneous) eliminating ideal of the tuple $(f_1, \dots, f_n)$. 
Now, since $\bar{J}$ is prime, it is saturated with respect to~$h$;
furthermore we have $\bar{J}\deh=J$, so we can deduce that $\bar{J} = J\hom$.
Clearly $(J\cap P)\hom \subseteq J\hom\cap P[h] = \bar{J}\cap P[h]$, hence, by dehomogenizing, we deduce that 
$J\cap P = ((J\cap P)\hom)\deh \subseteq (\bar{J}\cap P[h])\deh$.
On the other hand, if $f \in  (\bar{J}\cap P[h])\deh$ then we have $f \in \bar{J}\deh\cap P$,
but $\bar{J}\deh = (J\hom)\deh = J$, and the proof is complete 
since $\implicit(f_1, \dots, f_n) = J\cap P$ by Proposition~\ref{prop:implicitintersection}.a.
\end{proof}

%
%The proof of claim (a) follows immediately from the fact that $\bar{J}$ 
%is an eliminating ideal, hence prime.
%To prove (b),  let $\sigma$ be an elimination ordering of $\{t_1, \dots, t_s\}$ 
%and let GB be the reduced  $\sigma$-Gr\"obner basis of $\bar{J}$. 
%If ${\rm GB}=\{ G_1, \dots, G_r,\; G_{r+1}, \dots, G_u\}$
%where $G_1, \dots, G_r \in P[h]$ while $G_{r+1}, \dots, G_u \notin P[h]$,
%it follows  that $\bar{J}\cap P[h] = \ideal{ G_1, \dots, G_r }$ 
%(see~\cite{KR1} Proposition 3.6.6.). Now we use the rules of the 
%dehomogenization contained in~\cite{KR2}, Proposition 4.3.12. 
%
%From the equalities above it follows that 
%$\bar{J}\deh = \ideal{ G_1\deh, \dots, G_r\deh,\; G_{r+1}\deh, \dots, G_u\deh }$
%and also that
%$(\bar{J}\cap P[h])\deh = \ideal{ G_1\deh, \dots, G_r\deh }$.
%We know that $\{G_1\deh, \dots, G_r\deh,\; G_{r+1}\deh, \dots, G_u\deh \}$ 
%is a Gr\"obner basis of $\bar{J}\deh$ with respect to the ordering $\sigma'$
%induced by $\sigma$ on $P[t_1, \dots, t_s]$
%(see~\cite{KR2}, Proposition 4.3.18). Since $\sigma'$ is also an elimination ordering 
%of $\{t_1, \dots, t_s\}$, we deduce that 
%$\bar{J}\deh \cap P = \ideal{ G_1\deh, \dots, G_r\deh }$. 
%Finally,  we have the equality $\bar{J}\deh = J$,  hence $\bar{J}\deh\cap P = J\cap P$.
%Putting all the above relations together we get
%$J\cap P=   \bar{J}\deh \cap P  =  \ideal{ G_1\deh, \dots, G_r\deh } =
%(\bar{J}\cap P[h])\deh $,
%and the proof is complete.

%%%%%%%%%%%%%%%%%%%%%%%%%%%%%%
%%%%%%%%%%%%%%%%%%%%%%%%%%%%%%
\section{Hypersurfaces Parametrized by Polynomials}
\label{sec:param-by-polys}

In this section we start to treat the ``hypersurface case'', namely
the case where it is known that the implicitization ideal is principal.
In this situation we typically have $s = n-1$, although this equality
is not equivalent to the implicitization being a principal ideal, as
shown in Example~\ref{ex:bad-parametrizations}.

There is no easy way to determine whether the implicitization is going
to be a principal ideal, but this information might already be independently known for the
particular example under consideration.  So this is usually taken as
hypothesis by the papers on this topic.

%In any event, we are assuming
%we know a priori that the output is a principal ideal.

\subsection{A Truncated Homogeneous Computation}
\label{TruncHomogComp}

As already observed, the ideal $\bar{J}$ in 
Proposition~\ref{prop:homogpoly}  
is homogeneous, hence the computation of the (elimination) Gr\"obner basis  
of~$\bar{J}$ can be performed degree by degree.  Moreover, using the methods
described in Proposition~\ref{prop:homogpoly}
we get the following extra bonus in the hypersurface case:
as soon as we obtain a Gr\"obner basis element, $G$, which does not involve the 
parameters, we may stop the computation of the Gr\"obner basis because
the solution polynomial is just the %suitable
dehomogenization of~$G$.

%The following corollary makes this argument explicit.

\begin{corollary}\label{cor:princpoly}
With the same assumptions as in Proposition~\ref{prop:homogpoly},
if \,$\implicit(f_1,\dots,f_n)$ is a principal ideal generated by~$g$
then $\implicit(f_1\hom,\dots,f_n\hom)$ is a principal ideal generated
by~$g\hom$.
% assume that~$J\cap P$ is a principal ideal
% $\ideal{g}$.
% Then $\bar{J}\cap P[h] = \ideal{g\hom}$.

%\begin{enumerate}
%\item 
%If \,$\implicit(f_1, \dots, f_n)= 
% \item  Using an elimination ordering for $\{t_1, \dots, t_s\}$
% for the computation of a Gr\"obner basis of $\bar{J}$, 
% and computing degree-by degree, the polynomial 
% $g\hom$ is the first computed polynomial which belongs to $P[h]$.
% In particular it is the first computed polynomial whose 
% leading term belongs to $P[h]$.
% \end{enumerate}
\end{corollary}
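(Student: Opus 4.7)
My plan is to apply Proposition~\ref{prop:homogpoly} directly and then lift the principal generator back through homogenization. By the setup, $\implicit(F_1,\dots,F_n) = \bar J\cap P[h]$, which is homogeneous (its generators $x_i - F_i$ of $\bar J$ are homogeneous in the weighted grading since $\deg(x_i) = \deg(F_i)$), prime by Proposition~\ref{prop:homogpoly}(a), and with dehomogenization equal to $\langle g\rangle$ by Proposition~\ref{prop:homogpoly}(b).

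The key step is to show that $\bar J\cap P[h]$ is saturated with respect to~$h$, for then the standard equality $I = (I\deh)\hom$ for $h$-saturated homogeneous ideals in $P[h]$ yields
\[
\bar J\cap P[h] \;=\; \bigl((\bar J\cap P[h])\deh\bigr)\hom \;=\; \langle g \rangle\hom.
\]
Saturation is automatic once we know $h\notin \bar J$, since any prime ideal not containing~$h$ is trivially $h$-saturated. But $h\in\bar J$ would dehomogenize to $1\in\bar J\deh = J$, contradicting the fact that $J$ is a proper prime ideal (as $J\cap P = \implicit(f_1,\dots,f_n) = \langle g\rangle$ is proper).

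Finally, I would verify the identification $\langle g\rangle\hom = \langle g\hom\rangle$. Since $P$ is an integral domain, $\deg(fg) = \deg(f)+\deg(g)$ and hence $(fg)\hom = f\hom\,g\hom$ for every $f\in P$; so every natural generator of $\langle g\rangle\hom$ lies in $\langle g\hom\rangle$, and the reverse inclusion is immediate. The one mildly conceptual point is the passage from ``prime'' to ``$h$-saturated'' via $h\notin\bar J$; the remainder is bookkeeping with the notation already established in Section~\ref{sec:param-by-polys} and Proposition~\ref{prop:homogpoly}.
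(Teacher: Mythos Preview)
Your argument is correct and follows essentially the same route as the paper's proof: use Proposition~\ref{prop:homogpoly}(b) to identify $(\bar J\cap P[h])\deh$ with $\langle g\rangle$, use Proposition~\ref{prop:homogpoly}(a) to see that $\bar J\cap P[h]$ is prime and hence $h$-saturated, and then homogenize back to obtain $\langle g\hom\rangle$. You actually supply two details the paper leaves implicit---the check that $h\notin\bar J$ (needed so that ``prime'' really forces $h$-saturation) and the verification that $\langle g\rangle\hom=\langle g\hom\rangle$---so your version is, if anything, slightly more complete.
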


\begin{proof}
%To prove claim (a) 
%we make the assumption that $\implicit(f_1, \dots, f_n) =\ideal{g}$.
We recall
the equality $\implicit(f_1, \dots, f_n) = (\bar{J}\cap P[h])\deh$ proved in 
Proposition~\ref{prop:homogpoly}.b.  This implies 
that $(\bar{J}\cap P[h])\deh =\ideal{ g }$.  Conversely, 
the ideal $\bar{J}\cap P[h]$ is prime by Proposition~\ref{prop:homogpoly}.a,
hence it is saturated with respect to~$h$, and so it is generated by $g\hom$.
%
% Finally, we prove~(c). It follows from~(b) that $g\hom$ is the
% polynomial of minimal degree in $\bar{J}\cap P[h]$ moreover from the
% additional assumption on $\sigma$ it follows that $\LT_\sigma(g\hom)$
% is not divisible by $h$, thus the conclusion.
\end{proof}

%The following algorithm follows immediately from Corollary~\ref{cor:princpoly}.

\goodbreak
\begin{algorithm}
\label{alg:ElimTH}
\textbf{(ElimTH: Truncated Homogeneous Elimination)}
%%%\\
%%%Let $f_1, \dots, f_n \in \Kt \setminus K$ be such that the ideal 
%%%$\implicit(f_1,\dots,f_n)$ is principal.
\begin{description}
\item[Input] $f_1,\dots,f_n \in \Kt \setminus K$ such that the ideal 
$\implicit(f_1,\dots,f_n)$ is principal.
\item[ElimTH-1] Initialization:
  \begin{description}[topsep=-2pt,parsep=-1pt]
  \item[ElimTH-1.1]
    Create the polynomial ring 
    $R = K[t_1,\dots,t_s,\; h,\; x_1,\dots,x_n]$ graded
    by $[1,\dots,1,\; 1, \; \deg(f_1), \dots, \deg(f_n)]$.\\
    Let $\sigma$ be an elimination ordering for $\{t_1, \dots,t_s\}$ on~$R$.
  \item[ElimTH-1.2]
    Let $F_i = f_i\homh \in R$.
  \item[ElimTH-1.3]
    Let $J = \ideal{ x_1-F_1,\dots,x_n-F_n }$, the eliminating
    ideal of $(F_1,\dots,F_n)$.
  \end{description}
\item[ElimTH-2] \textit{Main Loop:}\\
  Start Buchberger's algorithm for the computation of a $\sigma$-Gr\"obner
  basis~$GB$ of~$J$.\\
  Perform its main loop degree by degree
  (\ie~always choose the lowest degree pair).\\
  When you add to $GB$ the first polynomial~$G$
  such that $\LT_\sigma(G)$ is not divisible by any $t_i$
  exit from loop.
  
\item[ElimTH-3]  Let $g\calc = G\dehh$ mapped into $K[x_1,\dots,x_n]$.

%%\item[Output] $g \in K[x_1,\dots,x_n]$
\item[Output] $g\calc \in K[x_1,\dots,x_n]$
\end{description}
Then $g\calc$ generates $\implicit(f_1,\dots,f_n)$.
\end{algorithm}

\begin{proof} 
\textit{Termination}:
The \textit{Main Loop} in the algorithm is simply Buchberger's algorithm, and
that terminates in a finite number of steps.
Moreover, Corollary~\ref{cor:princpoly} guarantees that $\bar{J}$
contains a polynomial not involving the $t_i$ indeterminates,
and since $\sigma$ is an elimination ordering for the $t_i$ there is
such a polynomial in the Gr\"obner basis,
so the \textit{Main Loop} will set~$G$ and exit.

\smallskip
\noindent
\textit{Correctness:}
In the \textit{Main Loop} we execute Buchberger's Algorithm
with respect to an elimination ordering for all the $t_i$;
thus the elements of the Gr\"obner basis whose leading terms
are not divisible by any $t_i$ form a Gr\"obner basis for the
elimination ideal $\bar{J} \cap P[h]$.  By Corollary~\ref{cor:princpoly}
this ideal is principal, so Buchberger's Algorithm (computing degree by degree)
will produce exactly one polynomial whose leading term is not divisible
by any $t_i$.  The \textit{Main Loop} stops as soon as this polynomial is found.
In step \textbf{ElimTH-3} the polynomial~$G$ will be the
generator of $\implicit(f_1\hom,\dots,f_n\hom)$, and by Corollary~\ref{cor:princpoly} we have that $g\calc = G\dehh$ is the generator of $\implicit(f_1,\dots,f_n)$.
\end{proof}

\begin{remark}
  \label{rem:ElimTH-bad-input}

We consider briefly what happens if the input to
Algorithm~\ref{alg:ElimTH}~(ElimTH) does not correspond to a principal
implicitization ideal.  If $\implicit(f_1,\dots,f_n)$ is the zero
ideal then Buchberger's Algorithm in step \textbf{ElimTH-2} will
terminate without finding any candidate for~$G$; we could in that case
simply set $G=0$.  By Remark~\ref{rem:s=n-1} this cannot happen if
$s \le n-1$.

If, instead, $\implicit(f_1,\dots,f_n)$ is non-zero and non-principal
then the polynomial~$G$ found in step \textbf{ElimTH-2} will be a
lowest weighted-degree element of a Gr\"obner basis for that ideal (and
consequently a lowest weighted-degree non-zero element of the ideal).

\end{remark}

\goodbreak
This next example illustrates the good behaviour of the algorithm above. 

\begin{example}\label{ex0}
We let ${K = \mathbb Z/(32003)}$ and
in the ring $K[x_1, x_2, t]$ we consider the 
eliminating ideal 
$$
I = \ideal{ x_1-(t^{15}-3t^2-t+1), \; x_2-(t^{23}+t^{11}+t^3-t-2) }
$$
The usual elimination of~$t$ takes more than one hour, even
if we give the weights $15$ and $23$ to the indeterminates $x_1$, $x_2$ 
respectively; whereas
the truncated homogeneous elimination takes less than a second
(this is one of our test cases: see Example~\ref{ex12ABR-Poly}). 
The solution polynomial has 176 power-products in its support.
%% JAA: with weights but not homog took 3400s, vs 0.17s homog: ratio is 20000
\end{example}

\begin{remark}
  If the eliminating ideal is not homogeneous, the idea of truncating
  the computation as soon as a polynomial in~$P$ is found does not
  work well, since it may happen that the first such polynomial computed
  by the algorithm is a proper multiple of the solution
  polynomial.  The phenomenon is similar to the case where the reduced
  Gr\"obner basis of an ideal is~$\{1\}$, yet before discovering that~$1$ is
  in the basis it often happens
  that many other (non-reduced) Gr\"obner basis elements are computed.

  One could take the polynomial found and factorize it, then substitute into
  the various irreducible factors to see which factor is the good one.  But this
  is unlikely to be efficient.
\end{remark}

%%%%%%%%%%%%%%%%%%%%%%%%%%%%%
%%%%%%%%%%%%%%%%%%%%%%%%%%%%%
\subsection{A Direct Approach}
\label{sec:direct}

% ---- Anna old notes -----

% \begin{algorithm}[primo]
% Input:\\
% ParamDescr = $[f_1(\T),...,f_n(\T)]$

% Initialization:\\
% %    reducers = [one($\Kt$)];
% QB = $\emptyset$ (the list of considered power-products in $\Kx$)\\
% QBGen = [1] (the list power-products in $\Kx$ yet to be considered)

% Repeat:
% \begin{enumerate}
% \renewcommand{\labelenumi}{\arabic{enumi}.}  % items as 1. 2. ..
% \item Let NewPP be the first in QBGen;
% add it to QB;    remove it from QBGen, and update QBGen (adding the
% multiples of NewPP)
% \item Evaluate $NewPP(ParamDescr)$.
% \item Is NewPP(ParamDescr) a linear combination of 
% $PP_i(ParamDescr)$, $PP_i(\X)$ in QB?
% \begin{description}
% \item[yes] return  the polynomial 
% $g(\underline x) = \sum \alpha_iPP_i$\\
% where NewPP(ParamDescr) = $\sum \alpha_iPP_i(ParamDescr)$
% \item[no] append NewPP to QB
% \end{description}
% \end{enumerate}

% Good: very direct

% Bad: needs lots of memory
% \end{algorithm}

We briefly recall the setting of this paper:
we have been given a $K$-algebra homomorphism
$\phi{:}\; K[x_1,\ldots,x_n] {\longrightarrow} \Kt$
sending $x_i \mapsto f_i$
and we assume that its kernel is a principal ideal:
the problem is to find the generator of
$\ker(\phi) = \implicit(f_1,\dots,f_n) = \ideal{g}$.
Following Remark~\ref{rem:exclude-constants}, we shall
find it convenient to assume that each $f_i \in \Kt$ is
non-constant.
In this section we compute the polynomial~$g$ via a \textit{direct} approach.

We use the notation {\boldmath $\LPP(f)$} to indicate the leading
power-product of the polynomial~$f$ (also denoted in the literature by
${\rm LT}(f)$ of ${\rm in}(f)$).  If $f=\sum_i a_i\PP_i$, with
distinct power-products $\PP_i$, then the \textbf{support} of~$f$ is
{\boldmath $\Supp(f)$} $= \{ \PP_i \mid a_1\ne 0 \}$.

\begin{remark}
\label{rem:lindep}
First of all notice that,
if a polynomial $f = \sum_i a_i T_i \in \Kx$ is such that $\phi(f)=0$, then
  $\sum_i a_i \, \phi(T_i) = 0$.  
In other words, there is a $K$-linear dependency among
the image polynomials $\{ \phi(\PP) \mid \PP \in \Supp(f)\} \subset \Kt$,
%are $K$-linearly dependent,
and the coefficients of the
linear relation are exactly the coefficients of~$f$ (up to a scalar
multiple). 
\end{remark}

The idea behind our direct approach is to \textit{directly}
determine $g$ by searching for a linear dependency among all the
$\phi(\PP)$: we generate, one by one, the polynomials $\phi(\PP)$ as $\PP$
runs through the power-products in $\Kx$ until a dependency
exists.  We shall now see how to reduce this apparently infinite
problem to a finite, tractable one.

%% \begin{definition}
%%   An \textbf{\compatto ordering} is a total ordering such that for every
%%   element there are only finitely many elements smaller than it.
%% %%%on $P=\Kx$ is an ordering $\sigma$
%% %%%such that for any power-product $\PP \in P$ there are only
%% %%%finitely many power-products $\sigma$-smaller than $\PP$.
%%   In particular, an \textbf{\compatto term-ordering} is a term-ordering
%%   which is also \compatto; consequently, an \compatto term-ordering
%%   is defined by a matrix with strictly positive entries in the first row.
%% \end{definition}

%% \begin{example}
%% The ordering \texttt{DegRevLex} is \compatto because for any
%% %given 
%% power-product~$\bar\PP$ all smaller power-products, $\PP<\bar\PP$,
%% must have $\deg(\PP)\le\deg(\bar\PP)$, and so they are finite in number.
%% Similarly, any degree-compatible term-ordering is \compatto.
%% In contrast, the lex-ordering (for 2 or more indeterminates) is not
%% \compatto because if indeterminate $x_2$ is less than $x_1$ then
%% all powers $x_2^d$ are smaller than $x_1$.
%% \end{example}

% Let $\TT$ be the list of all power-products in $K[t_1,\ldots,t_{n-1}]$.
% Then we can define a normal form vector map 
% $$K[x_1,\ldots,x_n] \To K^\NN
% \qquad p \mapsto v 
% \; \text{ such that } \;
% \psi(p) = \sum_{i \in  \NN} v_i T_i$$
% Note that the image vector is uniquely defined and
% has finite support.

\goodbreak
\begin{algorithm}
\label{alg:direct}
\textbf{(Direct: Implicitization by Direct Search)}
%% Let $f_1, \dots, f_n \in \Kt$ be such that the ideal 
%% $\implicit(f_1,\dots,f_n)$ is principal.
%% \\ \\
\begin{description}
\item[Input] $f_1,\ldots,f_n \in \Kt$ such that the ideal 
$\implicit(f_1,\dots,f_n)$ is principal.

\item[Variables]
  {\small
The main variables are:
\begin{description}[topsep=-2pt,parsep=-1pt]
\item[$\QB$:] the list of power-products in $\Kx$ already considered.
    % (quotient basis of $\Kx/\ideal{g}$).\\
% We will perform \textit{row-reductions} with them, 
% with respect to the (infinite) vector basis given by the power-products
% to the ordering in $\Kt$.
\item[$\PPList$:] the list of power-products in $\Kx$ yet to be considered.
\item[$\PhiQB$:] the list $[\phi(\PP_i)\mid\PP_i\in \QB]$; its elements
  are seen as ``sparse vectors'' in the infinite dimensional
  $K$-vector space $\Kt$ with basis comprising all power-products.
\end{description}
  }
  
\item[Direct-1] \textit{Initialization:}
  \begin{description}[topsep=-1pt,parsep=-1pt]
  \item[Direct-1.1]
    Fix an \compatto term-ordering $\sigma$ on $\Kx$.
  \item[Direct-1.2]
    Set $\QB = \emptyset$. \;  Set $\PhiQB = \emptyset$. \;  Set $\PPList = \{1\}$.
  \end{description}

\item[Direct-2] \textit{Main Loop:}
  \begin{description}[topsep=-1pt,parsep=-1pt]
  \item[Direct-2.1]
    Let $\PP = \min_\sigma(\PPList)$. \;
    Remove $\PP$ from $\PPList$.
  \item[Direct-2.2]
    Compute $v = \phi(\PP) \in \Kt$.
%   \item[Direct-2.3]
%     Reduce $v$ against the $v_i$'s in $\PhiQB$ to obtain:\\
%     $v^* = v - \sum_i a_i v_i$ \quad    with $a_i \in K$.
    
  \item[Direct-2.3]
%    Is $v^* = 0$?
    Is there a linear dependency $v = \sum_i a_i  v_i$ with $a_i \in K$ and $v_i \in \PhiQB$?
    \begin{description}[parsep=-3pt]
    \item[yes] exit from loop
    \item[no] \;
    Add to $\PPList$ the elements of
    $\{x_1 \PP,\ldots,x_n \PP\}$ not already in~$\PPList$;\\
      append $\PP$ to the list $\QB$; \;
      append $v$ to the list $\PhiQB$
    \end{description}
  \end{description}

\item[Direct-3]
  Let $g\calc = T - \sum_i a_i \PP_i$
% where $\PP_i$ is the  $i$-th power-product in $\QB$.
 \quad where $\PP_i\in \QB$ corresponds to $v_i \in \PhiQB$.

\item[Output] $g\calc \in \Kx$
\end{description}
Then $g\calc$ generates $\implicit(f_1,\dots,f_n)$.
\end{algorithm}

\begin{proof} \  %space needed so LaTeX behaves reasonably
\textit{Termination:}
The main loop of the algorithm considers the power-products
in the ring $\Kx$ in increasing $\sigma$-order until the condition in
step \textbf{Direct-2.3} breaks out; since~$\sigma$ is \compatto, every
power-product will be considered at some (finite) time.  The initial
values for $\QB$ and
$\PPList$, and the updates to these two variables in step \textbf{Direct-2.3\,(no)}
guarantee that whenever we enter step \textbf{Direct-2.1} the set~$\PPList$
satisfies $\PPList = \{ x_i \, \PP : 1 \le i \le n \hbox{ and } \PP \in \QB \}
\setminus \QB$; in other words it comprises those power-products
outside $\QB$ and which border on $\QB$.  As $\sigma$ is a term-ordering, $\PPList$
therefore always contains the $\sigma$-smallest power-product outside $\QB$
(as well as many others).

%% : the minimum $\PP$ of $\PPList$
%% is considered in step~\textbf{Direct-2.1}, 
%% then all multiples $x_iT$ are inserted in $\PPList$ in step~\textbf{Direct-2.2}.
%% Since  $\PP< x_i \PP$ (because $\sigma$ is a term-ordering) this
%% is effectively equivalent to having in $\PPList$ \textit{all} the (infinite number of)
%% power-products in $\Kx$.

In step \textbf{Direct-2.3}
the algorithm looks for a $K$-linear dependency amongst
the polynomials $\{\phi(\bar\PP) \mid \bar\PP\le_\sigma\PP\}$.
Every such linear dependency corresponds to a monic element of $\ker(\phi)$.
By hypothesis $\ker(\phi)$ contains the polynomial~$g$
(which we may assume to be monic wrt.~$\sigma$), so if we reach
step \textbf{Direct-2.3} with $\PP = \LPP(g)$ then a linear
dependency will surely be found (\eg~corresponding to the coefficients of~$g$).
Since $\sigma$ is an \compatto ordering,
%the set $\{ \PP \mid \PP \le \LPP(g) \}$ is finite
there are only finitely many power-products less than $\LPP(g)$;
so we will break out of the main loop when $\PP = \LPP(g)$,
if not earlier.

\smallskip
\noindent
\textit{Correctness:}
We shall show that we do not
break out of the main loop until $\PP = \LPP(g)$,
and that when we do break out, the polynomial we construct
in step \textbf{Direct-3} is~$g$.

%% The test in step~\textbf{Direct-2.3} gives \textit{true} if and only
%% if there is a polynomial of the form $\tilde g = \PP - \sum_i a_i \PP_i$ with each
%% $\PP_i <_\sigma \PP$ satisfying $\phi(\tilde g) = 0$, thus $\tilde g \in \ker(\phi)$;
%% note that $\tilde g$ is monic, thus non-zero by construction.

The test in step~\textbf{Direct-2.3} gives \textit{true} if and only
if there is a polynomial $\tilde{g}$, of the form $\PP - \sum_i a_i \PP_i$ with each
$\PP_i <_\sigma \PP$, satisfying $\phi(\tilde g) = 0$ or equivalently $\tilde g \in \ker(\phi)$.
Note that $\tilde g$ is monic, thus non-zero by construction.

By hypothesis $\ker(\phi)$ is a principal ideal (generated by~$g$).
So every non-zero element
of $\ker(\phi)$ has leading term $\sigma$-greater-than-or-equal to $\LPP(g)$,
thus step \textbf{Direct-2.3} will not find any linear dependency if
$\PP <_\sigma \LPP(g)$.
%% Since $\sigma$ is an \compatto ordering,
%% there are only finitely many power-products less than $\LPP(g)$,
%% this step \textbf{Direct-2.3} will eventually be reached with $\PP = \LPP(g)$.
%% When this happens a linear dependency will be found, corresponding to $monic(g)$;
%% this breaks out of the main loop, so no higher power-products
%% are ever considered.

%% Indeed, apart from~$g$ itself,
%% every monic element
%% of $\ker(\phi)$ is a non-trivial polynomial multiple of~$g$ whose leading
%% term is consequently greater than the leading term of~$g$.

Let $g\calc$ be the polynomial constructed in step \textbf{Direct-3}.
We have $\LPP(g\calc) = \LPP(g)$, and both polynomials are monic.
Suppose $g\calc \neq g$, and set $\hat{g} = g\calc - g$.  Then
$\LPP(\hat{g}) <_\sigma \LPP(g)$ but also $\phi(\hat{g}) =
\phi(g\calc)-\phi(g) = 0$, so $\hat{g} \in \ker(\phi)$ which
contradicts the fact that~$g$ is the (non-zero) element of $\ker(\phi)$
with $\sigma$-smallest leading term.
This concludes the proof.
\end{proof}

\begin{remark}
  \label{rem:direct-bad-input}
  We consider briefly what happens if the input to
Algorithm~\ref{alg:direct}~(Direct) does not correspond to a principal
implicitization ideal.  

If $\implicit(f_1,\dots,f_n)$ is the zero ideal 
then the \textit{Main Loop} never exits (as no non-trivial linear
dependency exists).
However, if $s \le n-1$ then the ideal $\implicit(f_1,\dots,f_n)$ cannot
be the zero ideal as proved in Remark~\ref{rem:s=n-1}.

If, instead, $\implicit(f_1,\dots,f_n)$ is non-zero and non-principal
then the main loop will exit, and the polynomial~$g\calc$ found in step
\textbf{Direct-3} will be the monic polynomial with $\sigma$-smallest
leading term in the reduced $\sigma$-Gr\"obner basis for that ideal.
\end{remark}

\begin{remark}
  This approach is inspired by the \textit{Generalized
    Buchberger-M\"oller algorithm}~\cite{AKR}, and is somewhat simpler
  (\eg~the list~$G$ for storing the Gr\"obner basis is not needed, and the 
  update to the list~$\PPList$ is
  simpler).  But there is an important difference: here we cannot
  specify \textit{a priori} a finite dimensional vector space as the
  codomain of the \textit{normal form vector map}.  For the
  generalized Buchberger-M\"oller algorithm the finiteness of the
  codomain led to an easy proof of termination; instead here we had to
  introduce the concept of \compatto ordering.
\end{remark}

\begin{remark}[Optimizations]
\label{rem:DirectOpt}
We mention here a few important optimizations which considerably improve the
execution time:
\begin{enumerate}
\item
  The successive linear systems we check in step~\textbf{Direct-2.3} are
  very similar: in practice we build up a row-reduced matrix adjoining a
  new row on each iteration.
\item
  The computation of $\phi(\PP)$ in step~\textbf{Direct-2.2} can be
  effected in several ways.  We suggest exploiting the fact that
  $\phi$ is a homomorphism to compute the value cheaply.  Apart from
  the very first iteration when $\PP = 1$, we always have $\PP = x_j
  \, \PP'$ for some indeterminate $x_j$ and some power-product $\PP'$
  for which we have already computed $\phi(\PP')$; so we can calculate
  with just a single multiplication $\phi(\PP) = \phi(x_j) \,
  \phi(\PP')$.  Usually there are several choices for the
  indeterminate $x_j$, so we can choose the one which leads to the
  cheapest multiplication.  Note that in step~\textbf{Direct-2.3(no)}
  we manipulate just power-products when updating $\PPList$.

\item
In step~\textbf{Direct-1.1} we pick some \compatto ordering on the
power-products of the ring $\Kx$.  Here we describe a specific good
choice; the idea is that as we pick (in step~\textbf{Direct-2.1}) the
power-products $\PP$ in increasing order then the corresponding
$\LPP(\phi(\PP))$ are in non-decreasing order.

We start with a (standard) degree-compatible term-ordering $\tau$ on
the power-products of $\Kt$.  Let $M_\tau$ be an $s\times s$ integer matrix
representing it (so all entries in the first row are~$1$).  We define
the \textbf{order vector} of a power-product $t_1^{e_1} t_2^{e_2} \cdots t_s^{e_s}$
to be $M_\tau \underline{e}$; the ordering $\tau$ is thus equivalent to
lex comparison of the order vectors.

%% of \compatto term-ordering with the property that in
%% step~\textbf{Direct-2.1} we choose the power-product with
%% $\tau$-smallest $\LPP_\tau(\phi(\PP))$, therefore guiding the search
%% with the degree of $\phi(\PP)$.  Fix a (standard) degree-compatible
%% term-ordering $\tau$ on the power-products of $\Kt$, and let $M_\tau$
%% be an $s\times s$ matrix representing it (so all entries in the first
%% row are~$1$).

Let $E$ be the $s\times n$ integer matrix whose columns are the
exponents of $\LPP_\tau(f_i)$; put $M = M_\tau \cdot E$, an $s\times
n$ matrix whose $i$-th column is the order vector of
$\LPP_\tau(\phi(x_i))$.  The first row of $M$ is strictly positive:
the $i$-th entry is $\deg(f_i)$.  We complete $M$ to a term-ordering
matrix $M'$ for the power-products of $\Kx$: \ie~we remove rows
linearly dependent on those above it, and adjoin new rows at the
bottom to make~$M'$ square and invertible.  The
term-ordering defined by $M'$ is \compatto since~$M$ and~$M'$ have the
same first row, and it has our desired property.

\end{enumerate}
\end{remark}
% optimization (``withCond''): store computed normal forms - for slices

\begin{example}
  An example to illustrate Remark~\ref{rem:DirectOpt}(b).  Let $K[s,t]$
  have terms ordered by $\bigl(\begin{smallmatrix}1&1\\1&0\end{smallmatrix}\bigr)$.
    Let $f_1= s^5 -st^3 -t, \; f_2= st^2 -s,\; f_3= s^4 -t^2$ then the
    order vectors of the LPPs are $(5,5)$, $(3,1)$ and $(4,0)$ respectively.
    So we obtain the matrix $M' = \left( \begin{smallmatrix}5&3&4\\5&1&0\\\ast&\ast&\ast\end{smallmatrix}\right)$ where we can fill the last row freely to make the matrix invertible, \eg~$(0\;0\;1)$.
\end{example}

\begin{remark}
  We contrast Algorithm~\ref{alg:direct}~(Direct) with the method presented by Wang in~\cite{W}.
  The underlying idea is the same: find the generator by searching for a linear
  relationship among the images of power-products.  Wang's method adjoins new power-products
  in blocks.  Each block comprises all power-products of a given standard degree
  (where each indeterminate has degree~1).  Wang observed that the linear systems
  produced ``tend to be almost triangular''.

  Our approach adjoins new power-products one at a time, and this lets us
  use several important optimizations (described in
  Remark~\ref{rem:DirectOpt}).  On each iteration we do a single
  multiplication to obtain $\phi(\PP)$, and then a single ``row reduction''.
  Using the term-ordering described in Remark~\ref{rem:DirectOpt}(c)
  guarantees that our linear system is as triangular as possible, and by
  adjoining power-products one by one we keep the system small (and avoid
  computing extraneous %of
  images of power-products under $\phi$).

  The importance of these optimizations is illustrated by the computation
  time for Example~\ref{Wang}: our implementation of Algorithm~\ref{alg:direct}~(Direct)
  took less than 8 seconds, while Wang reported about 47000 seconds~---~no
  doubt some (but not all) of the speed gain is due to improvements in hardware.
\end{remark}

\begin{remark}
  The requirement that $\sigma$ be a term-ordering is stronger than
  necessary.  For instance, it is sufficient that $\sigma$ orders by degree
  (how it orders within a fixed degree is unimportant).  However, if we use
  such a general ordering then the list~$\PPList$
  will then have be to updated differently in step \textbf{Direct-2.3(no)}
  (\eg~fill it with all power-products of the next degree when it becomes
  empty, much like Wang's method~\cite{W}).
\end{remark}

\section{Hypersurfaces Parametrized by Rational Functions}
\label{sec:param-by-ratfns}

In this section we consider a parametrization given by 
rational functions $f_1, \dots, f_n$ in the field $L=K(t_1, \dots, t_s)$, where 
$\{t_1, \dots, t_s\}$ is a  set of indeterminates which are viewed as
parameters.  
We can write this parametrization with a common denominator $q$,
so that we have $f_i = \frac{p_i}{q}$ with 
$q, p_1, \dots, p_n \in K[t_1, \dots,t_s]$.

%% \begin{definition}\label{def:elimrat}
%%   Extending naturally Definition~\ref{def:elimideal},
%%   we define \textbf{Implicit}$(f_1, \dots, f_n)$ to be
%%   the kernel of the $K$-algebra homomorphism
%%   $\phi: K[x_1, ..., x_n] \To L$ which sends
%%   $x_i \mapsto \frac{p_i}{q}$ \quad  for $i=1, \dots, n$.
%% \end{definition}

\begin{remark}\label{rem:generalfrac}
We recall here a general 
method for computing $\implicit(\frac{p_1}{q}, \dots, \frac{p_n}{q})$.
We start with the ideal $I=\ideal{ qx_1-p_1, \dots, qx_n-p_n }
\subset K[x_1,\dots,x_n, \; t_1, \dots,t_s]$,
then we introduce a new indeterminate $u$,
and let $J = I + \ideal{ uq-1 }
\subset K[x_1,\dots,x_n, \; t_1, \dots,t_s, \; u]$.
Now we have $\implicit({\frac{p_1}{q}}_{\mathstrut}, \dots, \frac{p_n}{q}) = J\cap P$ which can be computed by eliminating the indeterminates
$u$ and $t_1, \dots, t_s$. The following example illustrates the necessity of
adding $\ideal{ uq-1 } $ to~$I$.
\end{remark}

\begin{example}\label{uqminusone}
We let $K = \mathbb Q$ and
we let $f_1 = f_2 = \frac{s}{t}, \,f_3 = s$ in $K[s,t]$.
We construct the ideal $I = \ideal{tx-s, \  ty-s, \ z-s}$,
and observe that
$I = \ideal{x-y}\cap \ideal{s,t,z}$.  Hence we get
$I \cap K[x,y,z] = \ideal{z\,(x-y)}$
which is not prime.
The correct result, which may be obtained by including the
generator $ut-1$, is $\implicit(f_1, f_2, f_3) =\ideal{ x-y }$.
\end{example}

In the following 
we present a ``homogeneous'' method for the computation of the ideal $\implicit(f_1, \dots, f_n)$
when $f_1,\dots, f_n$ are rational functions; compared to the classical
method the big advantage we have is that the Gr\"obner basis computation
(\ie~the elimination) is performed on a homogeneous ideal.

\begin{theorem}
\label{th:homogfractions}
  {\bf(Implicitizating Rational Parametrizations by Homogenization)}\\
Let $q, p_1, \dots, p_n$ be non-zero polynomials in 
$K[t_1, \dots, t_s]$.
%and let~$x_0, \, h$ 
We shall work in the graded ring
 $R = K[x_0,x_1, \dots, x_n, t_1, \dots, t_s, h]$
with grading defined by
${\deg(h) = \deg(t_i) = 1}$ for $i=1, \dots, s$ and
 $\deg(x_i)=d$ for $i = 0,\dots, n$ where $d =\max\{\deg(q), \deg(p_1), \dots, \deg(p_n)\}$.

\smallskip
Now make the input polynomials homogeneous and of equal degree~$d$:
set $Q=q\homhD{d}$, and set
$P_i = p_i\homhD{d}$ for $i =1, \dots, n$.
%% set $Q=h^{d-\deg(q)} q\homh$, and set
%% $P_i = h^{d-\deg(p_i)} p_i\homh$ for $i =1, \dots, n$.
Let $\bar{J}$ be the homogeneous eliminating ideal in the ring~$R$ generated by   
$\{x_0-Q,\; x_1-P_1, \dots, x_n-P_n\}$.  Then:
\begin{enumerate}[topsep=3pt,parsep=0pt]
\item The ideal $\bar{J}\cap K[x_0,x_1,\dots, x_n]$ is prime.

\item We have the equality $\implicit(\frac{p_1}{q}, \dots, \frac{p_n}{q})
=\big(\bar{J}\cap K[x_0,x_1,\dots, x_n]\big)\dehx0$.
\end{enumerate}
\end{theorem}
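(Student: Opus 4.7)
My plan is to prove (a) by exhibiting $\bar{J}$ as the kernel of an explicit $K$-algebra homomorphism whose codomain is a polynomial ring, and to prove (b) via two inclusions relating $I := \bar{J}\cap K[x_0,x_1,\dots,x_n]$ to $\ker(\phi) = \implicit(\frac{p_1}{q},\dots,\frac{p_n}{q})$, using the substitution $x_0 \mapsto 1$ in one direction and the lift $g \mapsto g\homx0$ in the other.

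For claim (a), I would consider the $K$-algebra map $\eta\colon R \To K[t_1,\dots,t_s,h]$ sending $x_0 \mapsto Q$, $x_i \mapsto P_i$, and fixing each $t_j$ and $h$. All generators of $\bar{J}$ lie in $\ker(\eta)$; conversely, modulo $\bar{J}$ each $x_i$ reduces to $P_i$ (resp.\ $Q$), so $R/\bar{J} \cong K[t_1,\dots,t_s,h]$ and hence $\bar{J} = \ker(\eta)$. The codomain is a domain, so $\bar{J}$ is prime, and its contraction $I$ to $K[x_0,\dots,x_n]$ is prime too.

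For the easy direction of (b), namely $I\dehx0 \subseteq \ker(\phi)$, I would take a standardly $x$-homogeneous $f \in I$ of degree $D$; such $f$ suffice because elements of $I$ involve no $t_j$ or $h$, so the $R$-grading restricts on $K[x_0,\dots,x_n]$ to $d$ times the standard grading. From $f(Q,P_1,\dots,P_n) = 0$ in $K[t_1,\dots,t_s,h]$ the substitution $h \mapsto 1$ gives $f(q,p_1,\dots,p_n) = 0$ in $K[t_1,\dots,t_s]$, and dividing by $q^D$ in $L$ yields $f\dehx0(\frac{p_1}{q},\dots,\frac{p_n}{q}) = 0$.

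The reverse inclusion $\ker(\phi) \subseteq I\dehx0$ is the main obstacle. Given $g \in \ker(\phi)$ of degree $D$, the natural lift is $f := g\homx0$, which is $x$-homogeneous of degree $D$ and satisfies $f\dehx0 = g$; so I must show $f(Q,P_1,\dots,P_n) = 0$ in $K[t_1,\dots,t_s,h]$, which will place $f$ in $I$. Writing $f = \sum_\alpha c_\alpha x_0^{D-|\alpha|} x^\alpha$ and applying the product part of Lemma~\ref{lemma:homogfractions} termwise gives $Q^{D-|\alpha|} P^\alpha = (q^{D-|\alpha|} p^\alpha)\homhD{Dd}$; the sum part then collapses the total to $\bigl(\sum_\alpha c_\alpha q^{D-|\alpha|} p^\alpha\bigr)\homhD{Dd}$, using the uniform inequality $Dd \ge \deg(q^{D-|\alpha|} p^\alpha)$ valid by the very choice of $d$. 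The inner sum equals $q^D \cdot g(\frac{p_1}{q},\dots,\frac{p_n}{q}) = 0$ in $L$, and being a polynomial in $K[t_1,\dots,t_s]$ it already vanishes there; invoking the convention $0\homhD{Dd} = 0$ then completes the argument. The delicate point is precisely this uniform-degree bookkeeping required to apply Lemma~\ref{lemma:homogfractions} at the common degree $Dd$.
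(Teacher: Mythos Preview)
Your proof is correct and follows essentially the same route as the paper's. The paper organizes the argument as a chain of four sets $S_1,\dots,S_4$ (homogeneous $A$ with $A(\tfrac{p_i}{q})=0$, with $A(Q,P_i)=0$, with $A(q,p_i)=0$, and with $A\dehx0(\tfrac{p_i}{q})=0$), proving their pairwise equalities; you instead prove the two inclusions of~(b) directly, but the substance is identical: the key steps are the identity $A(Q,P_1,\dots,P_n)=\bigl(A(q,p_1,\dots,p_n)\bigr)\homhD{dD}$ from Lemma~\ref{lemma:homogfractions} and the homogeneity relation $A(q,p_1,\dots,p_n)=q^{D}\,A\dehx0(\tfrac{p_1}{q},\dots,\tfrac{p_n}{q})$. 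Your use of the substitution $h\mapsto 1$ for the easy inclusion is a minor shortcut, but otherwise the two proofs coincide.
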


\begin{proof} 
The kernel $\implicit(\frac{p_1}{q}, \dots, \frac{p_n}{q})$ is an ideal in the ring
$P = K[x_1, \dots, x_n]$, which we view as a subring of~$R$.
We write $P[x_0]$ to denote the subring $K[x_0, x_1, \ldots, x_n]$:
observe that a polynomial in $P[x_0]$ is homogeneous in
the induced grading if and only if
it is homogeneous in the standard grading (\ie~in the usual sense of the word).

\smallskip
As in Proposition~\ref{prop:homogpoly}, the proof of claim~(a) follows immediately from
the fact that $\bar{J}$ is an eliminating ideal, hence prime. 

\medskip
\noindent
To prove claim (b) we introduce the following sets:
\begin{itemize}[topsep=2pt, parsep=-0pt]
\item $S_1=   \{ A \in P \ \mid \ \ 
  {A(\frac{p_1}{q}, \dots,  \frac{p_n}{q})} = 0\}$

%% \item $S_1=   \{ A\dehx0 \ \mid \ \ A\in P[x_0]  \hbox{ \rm is homogeneous and \ }  
%% {A\dehx0(\frac{p_1}{q}, \dots, \frac{p_n}{q})} = 0\}$.

\item $S_2 = \{ A\in P[x_0] \ \mid \ A \text{ is homogeneous and \ } 
{A(Q, P_1, \dots, P_n)}= 0\}$

\item $S_3=   \{ A\in P[x_0]  \ \mid \ A \text{ is homogeneous and \ } 
{A(q, p_1, \dots, p_n)} = 0\}$

\item $S_4=   \{ A\in P[x_0]  \ \mid \ A \text{ is homogeneous and \ } 
{A\dehx0(\frac{p_1}{q}, \dots, \frac{p_n}{q})}= 0\}$
\end{itemize}

\noindent
Clearly, the conclusion is reached if we prove the following claims.
\begin{enumerate}[topsep=2pt, parsep=0pt]
\renewcommand{\labelenumi}{(\arabic{enumi})}
\item The set $S_1$ is the ideal $\implicit(f_1, \dots, f_n)$.

\item The set $S_2$ generates the ideal $\bar{J} \cap P[x_0]$.

\item  We have $S_3 = S_2$.

\item  We have $S_4 = S_3$.

\item  We have $S_1 = \{ A\dehx0 \ \mid \ A \in S_4\}$.
\end{enumerate}

\smallskip
\noindent
Claim~(1) is just the definition of $\implicit$.

To prove claim~(2) we recall that the ideal~$\bar{J}$ is homogeneous, hence
$\bar{J} \cap P[x_0]$ is too.  Thus $\bar{J} \cap P[x_0]$ can be generated by
homogeneous elements, and $S_2$ contains all homogeneous elements in
$\bar{J} \cap P[x_0]$, and so it surely generates the ideal.
%% generated by homogeneous elements. The grading induced on $P[x_0]$ is
%% such that $\deg(x_i) = d$ for $i=0,\dots, n$, hence it coincides with
%% the standard grading and the conclusion follows.

We now prove claim~(3).
Let $A(x_0, x_1, \dots, x_n)$ be a homogeneous polynomial,
and let $D = \deg(A)$.  
%We view $A = \sum M_i$ as a sum of monomials, each
%being of degree $D$.
Repeated application of Lemma~\ref{lemma:homogfractions} 
on the monomials in $A$ shows that
% $$
% M_i(Q, P_1, \dots, P_n) = M_i(q\homhD{d}, p_1\homhD{d}, \ldots, p_n\homhD{d})
%  =(M_i(q,p_1,\ldots,p_n))\homhD{dD}
% $$
% Note that the subscript $dD$ appearing on the right hand side is independent of the monomial $M_i$.
% Finally we apply the lemma repeatedly again to form the sum, and obtain
% $$
% A(Q, P_1, \ldots, P_n) = (A(q, p_1,\ldots,p_n))\homhD{dD}
% $$
$$A(Q, P_1, \ldots, P_n) 
= A(q\homhD{d}, p_1\homhD{d}, \ldots, p_n\homhD{d})
= (A(q, p_1,\ldots,p_n))\homhD{dD}$$
whence $A(Q, P_1, \ldots, P_n) = 0$ if and only if $A(q, p_1,\ldots,p_n) = 0$.

%% Since $Q, P_1, \dots, P_n$ are homogeneous polynomials of the same degree, 
%% the substitution of $x_0$ with $Q$ and $x_i$ with~$P_i$
%% for $i=1, \dots, n$ produces a homogeneous polynomial $A(Q,P_1, \dots, P_n)$.
%% It is clear that we have $A(Q,P_1, \dots, P_n)\dehh= A(q, p_1, \dots, p_n)$, hence 
%% $ A(Q,P_1, \dots, P_n)=h^s\cdot A(q, p_1, \dots, p_n)\homh$ for a suitable 
%% integer $s$ (see~\cite{KR2}, Proposition 4.3.2.h),and  
%% the conclusion follows.

Finally, we prove claim~(4). 
We have
$$
A =  x_0^{\deg(A)}A \bigl( 1,\tfrac{x_1}{x_0}, \dots, \tfrac{x_n}{x_0} \bigr)
=x_0^{\deg(A)}A\dehx0 \bigl( \tfrac{x_1}{x_0}, \dots, \tfrac{x_n}{x_0} \bigr)
$$
consequently
$A(q, p_1, \dots, p_n) = q^{\deg(A)}A\dehx0(\frac{p_1}{q}, \dots, \frac{p_n}{q})$,
hence the claimed equality follows.
Since claim (5) is clear, the proof is complete.
\end{proof}

\begin{corollary}
If \,$\implicit(\frac{p_1}{q},\dots, \frac{p_n}{q}) = \ideal{g}$ then 
$\bar{J} \cap K[x_0,x_1,\dots, x_n]= \ideal{g\hom}$.
\end{corollary}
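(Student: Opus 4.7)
The plan is to combine Theorem~\ref{th:homogfractions} with the standard fact that a homogeneous prime ideal not containing a variable~$x_0$ is saturated with respect to~$x_0$, and so coincides with the homogenization of its dehomogenization. Write $I := \bar{J}\cap K[x_0,x_1,\dots,x_n]$ throughout. By Theorem~\ref{th:homogfractions}(a), $I$ is prime; and since $\bar{J}$ is generated by homogeneous elements (in the given grading on $R$), so is $I$ homogeneous in the standard sense on $K[x_0,\ldots,x_n]$. By Theorem~\ref{th:homogfractions}(b) we have $I\dehx0 = \implicit(\frac{p_1}{q},\ldots,\frac{p_n}{q}) = \ideal{g}$. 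It remains only to pass back up to $I$ via homogenization.

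The first thing I would check is that $x_0 \notin I$. Since $\bar{J}$ is the kernel of the $K$-algebra homomorphism sending $x_0 \mapsto Q$ and $x_i \mapsto P_i$, having $x_0 \in \bar{J}$ would force $Q = 0$; but $Q = q\homhD{d}$ and $q \neq 0$, so $Q \neq 0$ and hence $x_0 \notin \bar{J} \supseteq I$. Combined with primeness, this gives saturation: if $x_0 f \in I$ then $f \in I$, so $(I : x_0^\infty) = I$.

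Next I would apply the standard consequence of saturation for homogeneous ideals: a homogeneous ideal~$I$ satisfying $(I : x_0^\infty) = I$ equals the $x_0$-homogenization of its $x_0$-dehomogenization, $I = (I\dehx0)\homx0$. Substituting $I\dehx0 = \ideal{g}$ gives $I = \ideal{g}\homx0$. Finally, since $K[x_1,\dots,x_n]$ is an integral domain, homogenization is multiplicative on products of non-zero polynomials (there is no degree drop in a product), so $\ideal{g}\homx0 = \ideal{g\homx0}$, which is the claimed equality (the statement writes $g\hom$ for what is denoted $g\homx0$ in Definition~\ref{def:homD}).

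There is no substantive obstacle: the bulk of the work has already been done inside Theorem~\ref{th:homogfractions}, and the only genuinely new verification is the one-line check that $Q \neq 0$ to exclude $x_0$ from $\bar{J}$. The remaining reasoning is the routine saturation/homogenization correspondence for homogeneous prime ideals.
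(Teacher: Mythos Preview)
Your proof is correct and follows essentially the same route as the paper's, which simply refers back to the proof of Corollary~\ref{cor:princpoly}: use part~(a) of Theorem~\ref{th:homogfractions} for primeness, part~(b) for the dehomogenization, and then the standard ``prime $\Rightarrow$ saturated $\Rightarrow$ equals homogenization of its dehomogenization'' step. The one place you are more careful than the paper is in explicitly verifying $x_0\notin\bar{J}$ via $Q\neq 0$; the paper's referenced argument just asserts ``prime, hence saturated'' without isolating this check, so your version is slightly more complete but not a different approach.
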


\begin{proof}
The proof can be done exactly 
as the proof of Corollary~\ref{cor:princpoly}.a.
\end{proof}

\begin{remark}
  We may relax the restriction in the theorem that each $p_i$ be non-zero; it is
  there just to allow an easy definition of~$d$, the upper bound for the degrees.
\end{remark}

\begin{remark}\label{rem:ratgeneralhomog}
We could be tempted to use the 
general method highlighted in Remark~\ref{rem:generalfrac}:
namely, we homogenize the
generators of~$J$ given there to get the eliminating ideal~$\bar{J}^\dag$, and then 
imitate Proposition~\ref{prop:homogpoly}.
However, even if the ideal $(\bar{J}^\dag \cap P[h])\deh$ is principal, the ideal 
$\bar{J}^\dag \cap P[h]$ need not be principal, as the following example shows.
The main drawback is that $\bar{J}^\dag$ need not be saturated with respect to~$h$.
\end{remark}

\begin{example}[Ex~\ref{uqminusone} continued]\label{exnotsaturated}
We return to Example~\ref{uqminusone}, but this time homogenize the 
generators of $J=I +\ideal{ut-1}$ to produce the following 
ideal $\bar{J}^\dag = \ideal{ tx-hs,\, ty-hs,\, z-s,\, ut-h^2}$.
However, elimination yields $\bar{J}^\dag \cap K[x,y,z,h] =
 \ideal{xzh -yzh,\  xh^2 -yh^2}$
which is not principal.
Even if we homogenize the generators and 
bring them all to the same degree, we get the ideal 
$\bar{J}^\ddag =  \ideal{ tx-hs, \  ty-hs, \ h(z-s), \ ut-h^2}$,
and again elimination produces $\bar{J}^\ddag \cap K[x,y,z,h] =
 \ideal{xzh -yzh,\  xh^2 -yh^2}$.
\end{example}

%\begin{example}\label{ex:not saturated}
%In the ring $K(t_1,t_2)[x_1, x_2, x_3]$ we consider the 
%eliminating ideal 
%$$
%I = \ideal{ x_1-\frac{t_1^5+t_1^2-t_1-3}{(1-t_1)^7},\\
%  \; \: x_2-\frac{t_1^2-t_2-1}{(1-t_1)^7},
%\; \: x_3-\frac{t_2}{(1-t_1)^7} }
%$$
%The solution polynomial is 
%$$
%\begin{array}{l}
%f(x,y,z)=y^7 +7y^6z +21y^5z^2 +35y^4z^3 +35y^3z^4 +21y^2z^5 +7yz^6 +z^7 +\cdots
%\end{array}
% $$
%
% If we consider the ideal 
% $$
%\begin{array}{l}
%  $$J = \ideal{ u(h-t_1)^7-h^8,\qquad  (h-t_1)^7x_1-h^3(t_1^5+h^3t_1^2-h^4t_1-3h^5),\; \\
% \phantom{J = \langle} (h-t_1)^7x_3- h^7 t_2, \quad (h-t_1)^7x_2- h^6(t_1^2-ht_2-h^2) }
% \end{array}
% $$
% and eliminate $[t_1,t_2,h,u]$ we get the ideal minimally generated 
% by $\{h^{12}f,\; x^2h^{11}f\}$.
%\end{example}

\begin{remark}\label{weightedhomgnotgood}
In the case of rational functions, we could also be tempted to homogenize the input
in a similar way to Theorem~\ref{th:homogfractions} but applying just $\homh$
instead of equalizing
the degrees with~$\homhD{d}$.
This does not work because claim~(4) of the proof fails,
as the following easy example shows. 
\end{remark}

\begin{example}\label{ex:t2t3t4}
In the ring $K(t_1,t_2)[x_1, x_2, x_3]$ we consider the 
eliminating ideal 
$$
I = \ideal{ x_1-\frac{t_2^2}{t_1}, \;\; x_2-\frac{t_2^3}{t_1},
\;\; x_3-\frac{t_2^4}{t_1} }
$$
The correct answer is
$\implicit(\cdots) = \ideal{x_1x_3-x_2^2}$. However, if we consider the ring $K[x_0,x_1, x_2,x_3, t_1,t_2]$
graded by setting $\deg(x_0){=}1$, $\deg(x_1) {=} 2$, $\deg(x_2) {=} 3$, $\deg(x_3) {=} 4$, and
$\deg(t_1)=\deg(t_2)=1$, then the ideal $\bar{J} =\ideal{ x_0-t_1,\; x_1-t_2^2,\; x_2-t_2^3,\;
x_3-t_2^4 }$ is homogeneous, but the polynomial of minimal degree in 
$K[x_0,x_1,x_2,x_3]$ is $x_3-x_1^2$ whose degree is $4$ while the actual solution is
the polynomial $x_1x_3-x_2^2$ whose degree is $6$.
\end{example}

\goodbreak
We now turn Theorem~\ref{th:homogfractions} into an explicit algorithm:
\begin{algorithm} 
\textbf{(RatPar: Rational Parametrization)}
%%Let $f_1,\ldots,f_n \in K(t_1,\ldots,t_s)$.\\

\begin{description}[topsep=2pt,parsep=-1pt]
\item[Input] $f_1=\frac{p_1}{q},\dots,f_n=\frac{p_n}{q} \in K(t_1,\dots,t_s)$ where~$q$ is a common denominator.

\item[RatPar-1]
Let $d =\max\{\deg(q), \deg(p_1), \dots, \deg(p_n)\}$, taking $\deg(0) = 0$ if necessary.

\item[RatPar-2]
Create the polynomial ring
$R = K[t_1,\dots,t_s,\; h,\; x_0, \; x_1,\dots,x_n]$,\\
graded by 
${\deg(t_i) = \deg(h) = 1}$ for $i=1, \dots, s$,
and
$\deg(x_i)=d$ for $i = 0,\dots, n$.

%% \item[RatPar-3] Let $Q=h^{d-\deg(q)} q\homh \in R$, and let $P_i =
%%   h^{d-\deg(p_i)} p_i\homh \in R$ for $i =1, \dots, n$.

\item[RatPar-3] Let $Q=q\homhD{d} \in R$, and let $P_i =
  p_i\homhD{d} \in R$ for $i =1, \dots, n$.

\item[RatPar-4]
Compute $\ideal{G_1,\dots,G_m} = \implicit(Q, P_1, \dots, P_n)$.

\item[RatPar-5]
Compute $g_i = G_i\dehx0$ for all $i = 1, \dots, m$

%%\item[Output] $(g_1,\dots,g_m) \subseteq K[x_1,\dots,x_n]$
\item[Output] $(g_1,\dots,g_m) \subseteq K[x_1,\dots,x_n]$~---~satisfying $\ideal{ g_1,\dots,g_m } = \implicit(f_1,\dots,f_n)$.
\end{description}
\end{algorithm}

\begin{remark}
In step \textbf{RatPar-4} we may use any algorithm to compute the implicitization from the (homogeneous)
polynomial parametrization, \eg~Algorithms~\ref{alg:ElimTH} %~(ElimTH)
 or~\ref{alg:direct} %~(Direct).
\end{remark}

%% \begin{remark}
%%   The restriction to non-constant $f_i$ is just to exclude the case $f_i = 0$ for
%%   some~$i$, so that we can easily define~$d$ in step \textbf{RatPar-1}.
%% \end{remark}

%%%%%%%%%%%%%%%%%%%%%%%%%%%%%
%%%%%%%%%%%%%%%%%%%%%%%%%%%%%
\section{Modular Approach for Rational Coefficients}
\label{sec:RationalReconstruction}

It is well known that computations with coefficients in $\QQ$ can be
very costly in terms of both time and space.
When possible, it is generally a good idea to perform the computation
modulo one or more primes, and then ``lift'' the coefficients of
these modular results to coefficients in $\QQ$.  There are two
general classes of method: Hensel Lifting and Chinese Remaindering.
We shall use Chinese Remaindering.

The modular approach has been successfully used in numerous contexts:
polynomial factorization~\cite{Z}, determinant of integer
matrices~\cite{ABM}, ideals of points~\cite{ABKR}, and so on.  In any
specific application there are two important aspects which must be
addressed before a modular approach can be adopted:
\begin{itemize}[parsep=-0pt]
\item
  knowing how many different primes to consider to guarantee the result
  (\ie~find a realistic bound for the size of coefficients in the answer);
\item
handling \textit{bad primes}: \ie~those whose related computation follows a
different route, yielding an answer with the wrong ``shape''
(\ie~which is not simply the modular reduction of the correct non-modular result).
\end{itemize}
There is no universal technique for addressing these issues.  For our
particular application there is no useful coefficient bound, and only a
partial criterion for detecting bad primes (see Remark~\ref{rem:detecting}).
We shall use \textit{fault-tolerant rational recovery} to overcome our
limited knowledge about these two aspects
(see section~\ref{sec:MultiplePrimeMethod}).

\begin{definition} \textbf{(reduction modulo $p$)}
  Given a prime number~$p$ we denote the  usual ``reduction mod $p$'' ring homomorphism
  by $\psi_p:~\ZZ \To \ZZ/{\ideal{p}}$.
  We can extend $\psi_p$ naturally to $\ZZ[t_1,\ldots,t_s]$
  by mapping the coefficients but preserving the power-products,
  and extend it further to rational functions (over $\QQ$) by localizing
  away from its kernel in $\ZZ[t_1,\ldots,t_s]$.
  
 %%  $\ZZ_{\ideal{p}} \To \ZZ/{\ideal{p}}$ defined by $\frac{a}{b} \mapsto
 %%  {\bar a}\,(\bar{b})^{-1}$; here $\ZZ_{\ideal{p}}$ denotes the ring of rational numbers
 %%  whose denominators are not divisible by~$p$, namely the localization of $\ZZ$ away from $\ideal{p}$.
 %%  Observe that $\ZZ_{\ideal{p}}$ is a subring of $\QQ$, and it is easy to check whether a
 %%  rational number actually belongs to $\ZZ_{\ideal{p}}$.

 %%  We write can extend $\tilde\psi_p$ naturally to rational functions in $t_1,\ldots,t_s$
 %%  by mapping the coefficients but preserving the power-products.

 %% to get
 %%  $\psi_p:~\ZZ_{\ideal{p}}(t_1, \dots, t_s) \To \ZZ/{\ideal{p}}(t_1, \dots, t_s)$;
 %%  where $$ is the localization of $\ZZ_{\ideal{p}}(t_1, \dots, t_s)$ away from the
 %%  kernel of 
 %%  we
 %%  write $\psi_p$ also for the natural extension to rational functions.
\end{definition}

Our aim is to reconstruct the monic generator of
$\implicit(f_1,\dots,f_n)$ in $\QQ[x_1,\dots,x_n]$ from
the modular implicitizations
$\implicit\bigl(\psi_p(f_1),\dots,\psi_p(f_n)\bigr)$ 
in $\ZZ/\ideal{p}[x_1,\dots,x_n]$.

\subsection{Bad Primes}

%% We recall that both our algorithms (Algorithm~\ref{alg:ElimTH} and Algorithm~\ref{alg:direct})
%% require a term-ordering to be specified.  We denote this term-ordering by $\sigma$.

Let $f_1,\ldots,f_n \in \QQ(t_1,\ldots,t_s)$ be non-constant and such that
$\implicit(f_1,\ldots,f_n) = \ideal{g}$ is a principal ideal, for some $g
\in \QQ[x_1,\ldots,x_n]$.  Clearly the generator~$g$ is defined only up to a non-zero
scalar multiple; we resolve this ambiguity by requiring~$g$ to be monic
(with respect to some fixed term-ordering on $\QQ[x_1,\ldots,x_n]$).  We
can now define $\den(g) \in \ZZ$ to be the least common denominator of the
coefficients of~$g$.
%% let $g_{\ZZ} = g\, \den(g)$, and observe that $g_{\ZZ}$ has coprime integer coefficients.

%% Thus for any prime number~$p$ we have $\psi_p(g_{\ZZ}) \neq 0$.
%% If all $\psi_p(f_i)$ exist then clearly
%% $\psi_p(g_{\ZZ}) \in \implicit(\psi_p(f_1),\dots,\psi_p(f_n))$.  In particular, this guarantees that
%% Algorithm~\ref{alg:direct} will terminate when given $\psi_p(f_1),\dots,\psi_p(f_n)$
%% provided $\implicit(f_1,\dots,f_n) \neq \ideal{0}$.

\begin{definition}
\label{def:unsuitable}
  We say that the prime~$p$ is \textbf{unsuitable} if any of the following happens:
\begin{description}[parsep=-0pt]
\item[(a)] there is an index~$i$ such that $f_i$ is not in the domain of $\psi_p$.
\item[(b)] there is an index~$i$ such that $\psi_p(f_i) = 0$ or\\
  $\deg(\psi_p(\numer(f_i))) < \deg(\numer(f_i))$ or \\
  $\deg(\psi_p(\denom(f_i))) < \deg(\denom(f_i))$.
\end{description}
In other words~$p$ is unsuitable if it divides any denominator,
or if the degrees of numerator and denominator of some $f_i$ change modulo~$p$.
It is easy to check whether~$p$ is unsuitable.

We exclude all unsuitable primes from subsequent discussions.
\end{definition}

\begin{definition}
We say that the prime~$p$ is \textbf{bad} if it is suitable but either of the following happens:
\begin{description}[parsep=1pt]
\item[(A)] $g$ is not in the domain of $\psi_p$, that is $p$ divides a denominator in $g$.
\item[(B)] $\implicit(\psi_p(f_1),\dots,\psi_p(f_n)) \neq \ideal{ \psi_p(g) }$.
\end{description}

\smallskip
We say that a prime is \textbf{good} if it is neither unsuitable nor bad.
We say that~$p$ is \textbf{very-good} if it is good and $\Supp(g) = \Supp(\psi_p(g))$;
in other words, it does not divide the numerator of any coefficient in~$g$.
\end{definition}

\begin{example}[Bad primes]
\label{ex:badprimes}

Given $f_1=t_1^3, \; f_2=t_2^3, \;f_3=t_1+t_2 \in \QQ[t_1,t_2]$
we have
$$\implicit(\cdots)=
%\implicit(f_1,f_2,f_3) =
\ideal{-x_3^9 +3x_1x_3^6 +3x_2x_3^6 -3x_1^2x_3^3
  +21x_1x_2x_3^3 -3x_2^2x_3^3 +x_1^3 +3x_1^2x_2
  +3x_1x_2^2 +x_2^3}
$$
but modulo~$3$ we obtain
$$\implicit(\psi_3(f_1),\;\psi_3(f_2),\;\psi_3(f_3))= 
\ideal{x_3^3 -x_1 -x_2}
\quad\subseteq\quad \ZZ/\ideal{3}[x_1,x_2,x_3]$$
So the prime~$3$ is bad because, even though the modular implicitization is
principal, it is not equivalent modulo~$3$ to the correct result.

\medskip
Indeed, even when
$\implicit(f_1,\dots,f_n)$ is principal 
in $\QQ[x_1,\dots,x_n]$
we cannot be sure that
$\implicit(\psi_p(f_1),\dots,\psi_p(f_n))$ is principal too.
For instance, given the parametrization $f_1=t_1+t_2, \;f_2=t_1-t_2$
and $f_3=t_1{-}t_2\in\QQ[t_1, t_2]$ we have
$$\implicit(f_1, f_2, f_3) = \ideal{x_2-x_3}$$
whereas modulo~$2$ we find that
$$
\implicit(\psi_2(f_1),\; \psi_2(f_2),\; \psi_2(f_3)) = 
\ideal{x_1-x_3,\; x_2-x_3}\quad\subseteq\quad \ZZ/\ideal{2}[x_1,x_2,x_3]$$
From
Remarks~\ref{rem:ElimTH-bad-input} and~\ref{rem:direct-bad-input},
we see that
in cases such as this, where the modular inputs do not satisfy the
assumption that $\implicit(\cdots)$ be principal, our 
Algorithms~\ref{alg:ElimTH}~(ElimTH)
and~\ref{alg:direct}~(Direct)
 for computing $\implicit(\psi_p(f_1), \ldots, \psi_p(f_n))$
will simply return the first polynomial in the ideal that they find.
\end{example}

\begin{remark}[Finitely many unsuitable primes]
\label{FinitelyManyUnsuitablePrimes}
Condition~(a) is satisfied if and only if $p$ divides the least common
denominator for all the $f_i$; clearly there are only finitely many
such primes.  Condition~(b) is satisfied if and only if~$p$ divides
the least common multiple of the integer contents of the leading forms
of the numerator and denominator of each $f_i$; again, clearly there are
only finitely many such primes.
\end{remark}

\begin{remark}[Finitely many bad primes]
\label{FinitelyManyBadPrimes}
  Clearly condition~(A) covers only finitely many primes.  For condition~(B)
  we consider what happens when Algorithm~\ref{alg:direct}~(Direct) runs.  We
  have a faithful modular implicitization if and only if the check for a linear dependency in
  step \textbf{Direct-2.3} actually finds one on the same iteration
  that it would have been found while computing over $\QQ$.  This will
  happen only if there was no linear dependency in any previous
  iteration; in other words, if the matrix had been of full rank in
  the penultimate iteration; and this happens for all primes except
  those which divide the numerators of all maximal minors~---~there
  are clearly only finitely many such primes.
\end{remark}

\begin{remark}
  Only finitely many primes are good but not very-good.
  By definition a prime is good but not very-good if it divides
  the numerator of some coefficient of~$g$, or equivalently if it divides the least common
  multiple of the numerators of the coefficients of~$g$.  Clearly only finitely many primes
  do so.  In conclusion, only finitely many primes are not very-good.
\end{remark}

%%\subsection{Detecting bad primes}
\begin{remark}[{Detecting bad primes}]
\label{rem:detecting}
We do not have an absolute means of detecting bad primes, but given the
implicitizations modulo two different primes we can sometimes detect
that one of them is surely bad (without being certain that the other
is good).  What we can say depends on which algorithm we used to
compute the implicitizations~---~we must use the same algorithm for
both modular computations!

If we run Algorithm~\ref{alg:ElimTH}~(ElimTH) with a bad prime~$p$ to produce
the output~$g_p$ then we know that $\deg(g_p) \le \deg(g)$.  Thus if we run
Algorithm~\ref{alg:ElimTH} %~(ElimTH)
with two different primes~$p_1$ and~$p_2$,
and if $\deg(g_{p_1}) < \deg(g_{p_2})$ then surely $p_1$ is a bad
prime.  Note that even if $\deg(g_p) = \deg(g)$, we need not have $g_p
= \psi_p(g)$ as shown by the non-principal ideal in Example~\ref{ex:badprimes} above.

If we run Algorithm~\ref{alg:direct}~(Direct) with a bad prime~$p$ to produce the
output $g_p$ then we know that $\LPP(g_p) <_{\sigma} \LPP(g)$ provided we use
the same, fixed \compatto term-ordering $\sigma$.  Thus if we
run Algorithm~\ref{alg:direct} %~(Direct)
 with two different primes~$p_1$ and~$p_2$,
and if $\LPP(g_{p_1}) <_\sigma \LPP(g_{p_2})$ then surely $p_1$ is a bad prime.
\end{remark}

\subsection{Single Prime Method}

Given input $f_1,\ldots,f_n$ we can pick a suitable prime~$p$, and run one
of our algorithms to get an output $g_p$.
If~$p$ is very-good then $\Supp(g_p) = \Supp(g)$.
We can then determine the coefficients of~$\monic(g)$ by solving a linear
system over $\QQ$.

Let $N = | \Supp(g_p) |$ and pick $N$ distinct $s$-tuples of random integers;
evaluating all the $f_i$ at each such $s$-tuple produces a ``random point''
on the hypersurface, \ie~a zero of~$g$.  If the $N \times N$ matrix whose $(i,j)$-entry
is the value of the $i$-th power-product (in $\Supp(g_p)$) at the $j$-th tuple is of full rank
then knowing that every point on the hypersurface is a zero of~$g$,
and knowing that the leading coefficient of $\monic(g)$ is~$1$ we can
solve the linear system to get all coefficients of $g\calc$, our
``informed guess'' for the value of $\monic(g)$.

We must now verify that $g\calc$ is correct; we do this by simply
substituting $f_1,\ldots,f_n$ into it.  If our choice of prime was
very-good then the substitution will verify that $g\calc$ is
correct.  Conversely, if our choice of prime~$p$ was not very-good
then the candidate ``informed guess'' for the support of $\monic(g)$
was wrong, and $g\calc$ will lie outside $\implicit(f_1,\ldots,f_n)$, so
the substitution will give a non-zero result; in this case we must start
again with a different prime, hoping that this time it will be very-good.

This technique is advantageous when the implicitization is especially sparse
(since then the linear system will be small).

\begin{remark}
  We can make a cheaper initial verification by picking another random point on
  the hypersurface, and verifying that that point is a zero of $g\calc$.
  Naturally, if this ``randomized check'' passes then a full verification must still be done.
\end{remark}

%% \begin{remark}
%%   We could avoid the use of random $s$-tuples by using deterministically
%%   chosen evaluation points from techniques used in polynomial sparse interpolation;
%%   however this typically involves numbers much larger than needed for a
%%   randomized approach, with consequent increase in computational cost for
%%   solving the linear system.
%% \end{remark}

\subsection{Multiple Prime Method}
\label{sec:MultiplePrimeMethod}

A disadvantage of the single prime method is that if the prime chosen
is not very-good then we discover this only at the end of a
potentially expensive verification.  We can greatly reduce the risk of
a failed verification by using several different primes, and combining
the corresponding modular answers using Chinese Remaindering.  Our
strategy must handle bad primes.  Using the checks in
Remark~\ref{rem:detecting} we can detect and discard some bad primes,
however it is possible that a few bad primes pass undetected.  We
use fault-tolerant rational reconstruction to cope with
any undetected bad primes; we will find the right answer so long as the
good primes sufficiently outnumber the undetected bad ones.

Moreover, when using several primes we do not require that any of the
primes be very-good; it is enough for most of the primes to be good
and ``complementary'' (\ie~the union of the supports of the answers
from all the good primes tried must include the support of the true
answer).

The key ingredient in this approach is a \textit{fault-tolerant
  rational reconstruction} procedure (\eg~see~\cite{Abb2015}
and~\cite{BDFP2015}): this enables rational coefficients to be
reconstructed from their modular images even if some of those
images are bad.  The reconstruction procedure normally returns
either the correct rational or an indication of failure, though
there is a low probability of it producing an incorrect rational.
So for certainty, the reconstructed implicit polynomial
must be verified.

We chose the HRR algorithm from~\cite{Abb2015} because it is better
suited to our application: compared to ETL from~\cite{BDFP2015} it
requires fewer primes (and therefore fewer costly modular
implicitizations) when reconstructing ``unbalanced'' rationals,
\ie~whose numerator and denominator have differing sizes.
%Indeed, in our experiments the coefficients of the implicitizations almost always had
%denominators much smaller than the numerators.

%% In all our tests, the time actually spent in the reconstruction of the
%% coefficients is negligible compared with the cost of computing the
%% implicitization, so a reconstruction method which requires fewer
%% primes is the better choice.

\begin{algorithm}
\label{alg:modular}
\textbf{(ModImplicit)}
\begin{description}[topsep=0pt,parsep=1pt]
\item[Input] $f_1,\ldots,f_n \in \QQ(t_1, \dots, t_s)$ such that $\implicit(f_1,\ldots,f_n)$ is principal.
\item[ModImplicit-1]
  Fix a term-ordering $\sigma$ on the power-product monoid of $\QQ[x_1, \dots, x_{n}]$;\\
  choose an \compatto ordering if using
  Algorithm~\ref{alg:direct}(Direct) in steps~3 and~5.2.
%, and use the same ordering
%on all $\ZZ/(p)[t_1, \dots, t_{n-1}]$
  \item[ModImplicit-2] Choose a suitable prime $p$~---~see Definition~\ref{def:unsuitable}.
  \item[ModImplicit-3] Compute $g_p$, the monic generator
    of $\implicit(\psi_p(f_1),\dots,\psi_p(f_n))$.% using Algorithm~\ref{alg:direct} and ordering $\sigma$.
  \item[ModImplicit-4] \; Let $g\crt = {g_p}$ \; and \; $\pi = p$.
  \item[ModImplicit-5] \textit{Main Loop:}
  \begin{description}[topsep=0pt,parsep=0pt]
  \item[ModImplicit-5.1] Choose a new suitable prime $p$ so all $f_i$ lie in the domain of $\psi_p$.
  \item[ModImplicit-5.2] Compute the monic generator $g_p$
    of $\implicit(\psi_p(f_1),...,\psi_p(f_n))$.% using Algorithm~\ref{alg:direct} and ordering $\sigma$.
  \item[ModImplicit-5.3] Let 
$\tilde\pi = \pi\cdot p$, and
$\tilde{g}\crt$ be the polynomial whose coefficients are obtained by Chinese
Remainder Theorem from the coefficients of $g\crt$ and $g_p$.
\item[ModImplicit-5.4] Compute the polynomial
  $g\calc \in \QQ[x_1,\dots,x_n]$ whose coefficients are
  obtained as the fault-tolerant rational reconstructions of the
  coefficients of $\tilde{g}\crt$ modulo $\tilde{\pi}$.
\item[ModImplicit-5.5] Were all coefficients  ``reliably''
  reconstructed?
    \begin{description}[parsep=-3pt]
    \item[yes] if $g\calc \neq 0$ and $g\calc(f_1,\dots,f_n)=0$  exit from loop
    \item[no] \; Let $g\crt = \tilde{g}\crt$ \; and \; $\pi = \tilde\pi$
    \end{description}
  \end{description}
\item[Output] $g\calc \in \QQ[x_1, \dots,x_n]$ which generates $\implicit(f_1,\ldots,f_n)$
\end{description}
\end{algorithm}

\begin{proof} \  % need this space  -- grrr, LaTeX can be very annoying
  \textit{Correctness:}
Let $g \in \QQ[x_1,\ldots,x_n]$ be the monic generator of $\implicit(f_1,\ldots,f_n)$.

From the test in step~\textbf{ModImplicit-5.5} we have that 
$g\calc(f_1,\dots,f_n)=0$, so the value returned is surely an
element of $\implicit(f_1,\ldots,f_n)$; consequently, $g\calc$
is a non-zero multiple of~$g$.

We show by contradiction that~$g\calc$ is a scalar multiple of~$g$.
Suppose not, then $g\calc = f\,g$ for some non-constant polynomial~$f$.
Let $\sigma$ denote the \compatto term-ordering used inside Algorithm~\ref{alg:direct}~(Direct);
and let $\deg^*$ denote the weighted degree used inside Algorithm~\ref{alg:ElimTH}~(ElimTH)~---~note
that condition~(b) in our definition of ``unsuitable'' makes sure that the same
weighted degree is used every time.

Let $T_1 = \LPP_\sigma(g\calc)$, then clearly $T_1 >_\sigma \LPP_\sigma(g)$.
Let $T_2$ be a term of $g\calc$ of maximal weighted degree; then $\deg^*(T_2) > \deg^*(g)$.
Note that $T_1$ and $T_2$ could be the same term.
Since step~\textbf{ModImplicit-5.4} succeeded in reconstructing $g\calc$
more than half the modular implicitizations had non-zero coefficients for
the term $T_1$, and similarly for the term $T_2$.  So at least one
modular implicitization, $g_p$, had non-zero coefficients for both $T_1$ and $T_2$,
but this $g_p$ cannot have been produced by Algorithm~\ref{alg:direct}~(Direct) because
it has $\LPP_\sigma(g_p) \ge_\sigma T_1 >_\sigma \LPP_\sigma(g)$, and it cannot have been
produced by Algorithm~\ref{alg:ElimTH}~(ElimTH)
 because $\deg^*(g_p) \ge \deg^*(T_2) > \deg^*(g)$.
Thus $g\calc$ is just a scalar multiple of $g$.

%% Since $g\calc$ was created by Chinese Remaindering and rational reconstruction
%% at least one prime~$p$ used by the algorithm produced a $g_p$ containing a term
%% involving $T$; so for this prime we have $\LPP_\sigma(g_p) \ge_\sigma T > \LPP_\sigma(g)$,
%% consequently that $g_p$ cannot have been produced by Algorithm~\ref{alg:direct}.

%% at least one of the
%% primes used in the algorithm does not divide $\LC(g\calc)$; call this prime~$p$.
%% Set $\hat{g}_p = \psi_p(g\calc) = \psi_p(f) \, \psi_p(g)$; thus $\deg(\hat{g}_p) > \deg(g)$
%% and $\LPP(\hat{g}_p) >_{\sigma} \LPP(g)$, but if we computed

%% If we use Algorithm~\ref{alg:direct} in step~\textbf{ModImplicit-5.2}
%% then $\LPP(g_p) \le \LPP(g)$ for all suitable primes~$p$, so $\LPP(g\crt) \le \LPP(g)$.
%% Since rational reconstruction operates just on the coefficients, $\LPP(g\calc) \le \LPP(g)$.
%% So if $g\calc$ passes the test in step~\textbf{ModImplicit-5.5} then $g\calc \in \ideal{g}$,
%% but since $\LPP(g\calc) \le \LPP(g)$ we must have $g\calc = g$.

%% If we use Algorithm~\ref{alg:ElimTH} in step~\textbf{ModImplicit-5.2} then
%% $\deg(g_p) \le \deg(g)$ for all suitable primes~$p$, so $\deg(g\crt) \le \deg(g)$.
%% Since rational reconstruction operates just on the coefficients, $\deg(g\calc) \le \deg(g)$.
%% So if~$g\calc$ passes the test in step~\textbf{ModImplicit-5.5} then $g\calc \in \ideal{g}$,
%% but since $\deg(g\calc) \le \deg(g)$ we must have $g\calc = g$.

%% With high probability it is of minimal degree (thus irreducible).
%% \red{ehm... should this be in the algorithm?  We don't do it}

\smallskip
\textit{Termination:}
The HRR algorithm in~\cite{Abb2015} for fault-tolerant rational reconstruction
guarantees to produce the correct output when the product of the good primes
is sufficiently greater than the square of the product of the bad primes
(see Corollary~3.2 in that article).

As there are only finitely many bad primes (see Remark~\ref{FinitelyManyBadPrimes}), the product
of the good primes chosen in the \textit{Main Loop} will eventually become
arbitrarily large compared to the square of the product of all bad primes (which is an
upper bound for the square of the product of all bad primes encountered in the \textit{Main Loop}).
Thus the reconstruction in step~\textbf{ModImplicit-5.4} will eventually
produce $g\calc = g$.
\end{proof}

\begin{remark}
  We can use the comments in Remark~\ref{rem:detecting} to discard some bad primes.
  If we always use Algorithm~\ref{alg:direct}~(Direct)
 to compute $g_p$ then we may insert the following step:
\begin{description}
  \item[ModImplicit-5.2a] If $\LPP(g_p) <_\sigma \LPP(g\crt)$ then go to step~\textbf{5.1}.\\
    If $\LPP(g\crt) <_\sigma \LPP(g_p)$ then set $g\crt = g_p$ and $\pi = p$; go to step~\textbf{5.1}.
\end{description}

\noindent
  If we always use Algorithm~\ref{alg:ElimTH}~(ElimTH)
 to compute $g_p$ then we may insert the following step:
\begin{description}
  \item[ModImplicit-5.2a] If $\deg(g_p) < \deg(g\crt)$ then go to step~\textbf{5.1}.\\
    If $\deg(g\crt) < \deg(g_p)$ then set $g\crt = g_p$ and $\pi = p$; go to step~\textbf{5.1}.
\end{description}
\end{remark}

%% Outline of the main loop:
%% \begin{itemize}
%% \item pick a new suitable prime~$p$
%% \item compute $g_p$ the monic answer modulo~$p$
%% \item Compare the structure of $g_p$ to that of $G$;
%%   \begin{itemize}
%%   \item if $g_p$ proves that $p$ is bad, discard $g_p$ and go to step~1
%%   \item if $g_p$ proves that all primes use to make $G$ are bad, set $G \leftarrow g_p$, and go to step~1
%%   \end{itemize}
%%     \item use CRT to augment $G$ with the value of $g_p$
%%     \item use fault-tolerant rational recovery to convert the coefficients of $G$ into rational numbers
%%     \item some rational coefficients could not be recovered, go to step~1
%%     \item check whether $g$ is correct substituting $f_1,\ldots,f_n$; if this does not produce 0, go to step~1
%%       \item return $g$
%% \end{itemize}

\begin{remark}
Since each $g_p$ is defined only up to a scalar multiple,
we normalize the polynomial by making it monic; this guarantees
that for every good prime~$p$, the corresponding polynomial $g_p$
is equal to $\psi_p(g)$.
%% it is important that we use the same term-ordering
%% on all $\ZZ/{\ideal{p}}[t_1, \dots, t_s]$.
\end{remark}

%We first compute $g_i$ in $\ZZ_{p_j}[\underline t]$ by some primes
% $p_1$ and $p_2$, and then we compute $g'$ combining
%  the two polynomials $g_1$ and $g_2$:
% we performe a CRT on all pairs of coefficients of any power-product in
% the support (or 0, if not the support).
% Then we get a polinomial with coefficients modulo $q=p_1\cdot p_2$.

% Having done that, from every coefficient  we
% try reconstructing the rational coefficient  
% using a fault-tolerant rational reconstruction procedure (see
% \cite{Abb2015}).
% If all coefficients can be plausibly reconstructed, then we 
% verify whether $g(f_1, .., f(n)) = 0$ (and very likely the equality
% holds!).

% If not, we take another prime $p_3$, and get $g_3$.
% We combine it with $g'$ to get a new $g'$ modulo $q\cdot p_3$
% and we try the rational reconstruction again.

%%%%%%%%%%%%%%%%%%%%%%%%%%%%%
%%%%%%%%%%%%%%%%%%%%%%%%%%%%%
\section{Timings}
\label{Timings}

In this section we show the practical merits of our algorithms.
We conducted two series of experiments, which we report in the
two tables below.

The experiments were performed on a MacBook Pro~2.9GHz Intel Core~i7, using our implementation
in \cocoa\,5.
The columns headed ``ElimTH'' and ``Direct'' report the computation times
for the respective algorithms: in each case there are separate columns
for computations over a finite field (char 32003), and over the rationals (char 0).
The column headed ``Len'' says how many terms there are in the resulting polynomial.
The symbol $\infty$ in the tables means that the computation was
interrupted after~20 minutes, and~$0$ means that the computation takes
less than $0.001$ seconds.
A horizontal line in the middle of the tables separates examples with polynomial parametrizations
from examples with rational parametrization.

\subsection{Examples from the Literature}

Table~1 contains statistics related to examples taken from the literature,
which we list in Appendix~\ref{OthersExamples}.  It shows that, with the sole exception
of Example~\ref{Wang}, they are computed in almost no time.

We found only two examples which defeated us: listed in our Appendix
as Examples~\ref{Dic2} and~\ref{Dic3}~---~originally they were Examples~5.2
and~5.3 in~\cite{BD}.  We suspect they are essentially incalculable
because the implicitizations are almost certainly polynomials of high
degree (over~100) having very many terms (over 100000).

\bigskip
\begin{center}
{\sc TABLE 1}

\medskip
\renewcommand{\arraystretch}{1.1}
\begin{tabular}{|l ||c|c||c|c|| r |}
\hline 
                               & ElimTH    & ElimTH  & Direct    & Direct &          \\
 Examples              &  32003  &  0        &  32003   &  0       &  Len  \\ 
\hline 
Ex \ref{d'Andrea}   &        0     &  0.009     &        0       &  0.003 &  6  \\ 
Ex \ref{Orecchia}   &        0     &  0.007     &        0       &  0.002 &  9   \\ 
Ex \ref{Enneper}    &        0     &  0.028     &        0       &  0.026 &  57  \\
Ex \ref{Robbiano}  &  0.273  &  0.597     & 0.031     &  0.118 &  319   \\
Ex \ref{Buse1}       &       0      &  0.021     &       0        &  0.070 &  13 \\
Ex \ref{Buse2}       &       0      &  0.228     &       0        &  0.083 &  56  \\
Ex \ref{Wang}        & 1.196   &  16.278   &  0.159    &  7.707 &  715  \\
\hline
Ex \ref{Dic1}          &     0        &  0.060  &        0       &  0.032 &  41  \\
Ex \ref{Dic4}          &     0        &  0.943  &        0       &  0.934 &  161  \\
Ex \ref{Bohemian} &          0   &  0.011  &             0  &  0.004 &  7  \\
Ex \ref{Sine}          &     0        &  0.012  &        0       &  0.010 &  7  \\
\hline
\end{tabular}
\end{center}

\bigskip

\subsection{Our Own Examples}

Table~2 contains statistics related to our own examples, which we list in
Appendix~\ref{OurExamples}.  The small numbers in brackets in the columns
for characteristic 0 are the number of moduli used in
Algorithm~\ref{alg:modular}~(ModImplicit)
 for reconstructing the rational coefficients.
The time to compute the answer is essentially the product of the number of
moduli and the time for a single finite field; the rest of the time is for
verification, which can represent more than half the total time as in
Example~\ref{ex4ABR-RatFun}.

\bigskip
\goodbreak
\begin{center}
{\sc TABLE 2}

\medskip
  \renewcommand{\arraystretch}{1.1}

\begin{tabular}{| l ||r|r||r|r|| r |}
\hline 
                             & ElimTH   & \phantom{a}ElimTH\phantom{a}  & Direct     & \phantom{aa}Direct\phantom{aa} &   Len          \\
 Examples            &  32003                     &  0                           &  32003  &  0        &                                            \\ 
\hline 
Ex~\ref{ex12ABR-Poly}   & 0.1    &{\footnotesize(5)}\hfill 0.9    & 0 &{\footnotesize(5)}\hfill  0.3  &  176  \\ 
Ex~\ref{ex13ABR-Poly}   &   2.1  &{\footnotesize(3)}\hfill 6.9    & 0.1 &{\footnotesize(3)}\hfill  0.4  & 471    \\ 
Ex~\ref{ex14ABR-Poly}   & $\infty$  &$\infty$& 8.41&{\footnotesize(5)}\hfill  58.2 &   6398    \\ 
Ex~\ref{ex15ABR-Poly}   & 20.3  &{\footnotesize(5)}\hfill    55.7  & 0.9 & {\footnotesize(5)}\hfill 3.4  &1705  \\ 
Ex~\ref{ex16ABR-Poly}   & $\infty$ &  $\infty$  & 58.4 &{\footnotesize(3)}\hfill 204.1 & 4304 \\ 
Ex~\ref{ex17ABR-Poly}   & 1.4   &{\footnotesize(3)}\hfill  4.8  & 9.1 &{\footnotesize(3)}\hfill 27.9 & 1763   \\ 
Ex~\ref{ex18ABR-Poly}   & $60.8 $  &  $\infty$         & 228.0     &  \hfill  $\infty$&   9360    \\ 
Ex~\ref{ex19ABR-Poly}   & 2.2    & {\footnotesize(3)}\hfill  9.3   & 47.3 &{\footnotesize(3)}\hfill 148.9  & 5801 \\ 
Ex~\ref{ex20ABR-Poly}   & 5.0     &{\footnotesize(6)}\hfill   71.5  & $\infty$  &$\infty$   & 6701     \\ 
Ex~\ref{ex21ABR-Poly}   & 10.2   & {\footnotesize(11)}\hfill 121.0  & 36.4  &{\footnotesize(11)}\hfill 418.5  & 2356  \\ 
\hline
Ex~\ref{ex1ABR-RatFun}   &  0.1    &  {\footnotesize(4)}\hfill  1.370  &0.1  & {\footnotesize(4)}\hfill 1.2  &62      \\ 
Ex~\ref{ex2ABR-RatFun}   &  0.6   & {\footnotesize(2)}\hfill  2.8  & 1.1  & {\footnotesize(2)}\hfill  3.8    & 57     \\ 
Ex~\ref{ex3ABR-RatFun}   & 0.6    & {\footnotesize(3)}\hfill   13.4  & 2.1  & {\footnotesize(3)}\hfill  17.9   & 115   \\ 
Ex~\ref{ex4ABR-RatFun}   & 10.4  & {\footnotesize(3)}\hfill  159.1    & 64.8  & {\footnotesize(3)}\hfill  335.0 &189   \\ 
Ex~\ref{ex5ABR-RatFun}   &  63.3 &  {\footnotesize(2)}\hfill 141.7    &46.2  & {\footnotesize(2)}\hfill 101.6 & 149   \\ 
Ex~\ref{ex6ABR-RatFun}   & 116.4 & {\footnotesize(6)} \hfill  761.4        & 202.7  &{\footnotesize(6)}\hfill 1214.4& 2692  \\
\hline
\end{tabular}
\end{center}

%\vskip 1cm
%%%%%%%%%%%%%%%%%%%%%%%%%%%%%%%%%%%%%%%%%%%%%
%\red{\tt TODO: The following is just a remark. We could delete it.}
%
%\medskip
%Sphere: $(\sin(t)\cos(s), \; \sin(t)\sin(s), \; \cos(t))$
%\begin{verbatim}
%Kt ::= K[sint, sins, cost, coss];
%Kx ::= K[x,y,z];
%
%Use Kt;
%X := sint*coss;
%Y := sint*sins;
%Z := cost;
%cond := [sint^2+cost^2-1, sins^2+coss^2-1];
%\end{verbatim}

%%%%%%%%%%%%%%%%%%%%%%%%%%%% 
% Bibliography
%%%%%%%%%%%%%%%%%%%%%%%%%%%%

\bigskip\goodbreak

\section{Appendix: Implicitization Examples}
\label{appendix:examples}

In this appendix we list the test examples we used.  The symbol~$K$ is used to denote 
either the field $\mathbb F_{32003}$ or the field $\mathbb Q$.
The examples are of different types: in the first subsection there are
examples taken from the literature; in the second there are our own examples.

\subsection{Examples from the Literature}
\label{OthersExamples}
Here we collect examples taken from some papers mentioned in the references.

\begin{example}\label{d'Andrea} 
({\sc\cite{CdA}, Example 3.4})\quad
In the polynomial ring $K[t_0, t_1]$ we let
$$
\begin{array}{l}
f_1= t_0^4, \quad f_2= 6t_0^2t_1^2 -4t_1^4,\quad f_3= 4t_0^3t_1 -4t_0t_1^3
\end{array}
$$
\end{example}

\begin{example}\label{Orecchia} 
({\sc \cite{OR}, Example 3.1})\quad
In the polynomial ring $K[t_1,t_2,t_3]$ we let
$$
\begin{array}{l}
f_1= t_1t_2^2 -t_2t_3^2 \quad f_2= t_1t_2t_3 + t_1t_3^2,\quad f_3= 2t_1t_3^2 - 2t_2t_3^2,
\quad f_4= t_1t_2^2
\end{array}
$$
\end{example}

\begin{example}\label{Enneper}
({\sc \cite{EKK}, Enneper's Surface}, in Table 4)\quad
In the polynomial ring $K[s, t]$ we let
$$
\begin{array}{l}
f_1= t -\frac{1}{3}t^3 +s^2t, \quad f_2= 2-\frac{1}{3}s^3 +st^2, \quad f_3= t^2-s^2
\end{array}
$$
\end{example}

\begin{example}\label{Robbiano}
({\sc \cite{R}, Example 1.22})\quad
In the polynomial ring $K[s, t]$ we let
$$
\begin{array}{l}
f_1= s^5 -st^3 -t, \quad f_2= st^2 -s,\quad f_3= s^4 -t^2
\end{array}
$$
\end{example}

\begin{example}\label{Buse1}
({\sc \cite{BC}, Example 3.3.2})\quad
In the polynomial ring $K[s,t,u]$ we let
$$
\begin{array}{l}
f_1= s^2t +2t^3 +s^2u +4stu +4t^2u +3su^2 +2tu^2 +2u^3,\\
f_2=  -s^3 -2st^2 -2s^2u -stu +su^2 -2tu^2 +2u^3,\\
f_3 = -s^3 -2s^2t -3st^2 -3s^2u -3stu +2t^2u -2su^2 -2tu^2,\\
f_4 = s^3 +s^2t +t^3 +s^2u +t^2u -su^2 -tu^2 -u^3
\end{array}
$$
\end{example}

\begin{example}\label{Buse2}
({\sc \cite{BC}, Example 3.3.4})\quad
In the polynomial ring $K[s,t]$ we let
$$
\begin{array}{l}
f_1 = s^3 -6s^2t -5st^2 -4s^2u +4stu -3t^2u,\\
f_2 = -s^3 -2s^2t -st^2 -5s^2u -3stu -6t^2u,\\
f_3 = -4s^3 -2s^2t +4st^2 -6t^3 +6s^2u -6stu -2t^2u,\\
f_4 = 2s^3 -6s^2t +3st^2 -6t^3 -3s^2u -4stu +2t^2u
\end{array}
$$
\end{example}

\begin{example}\label{Wang}
({\sc \cite{W}, Example 13 p. 913})\quad
In the polynomial ring $K[s,t]$ we let
$$
\begin{array}{l}
f_1= s^3 +3t^3 -3s^2 -6t^2 +6s +3t -1,\quad 
f_2 = 3s^3 +t^3 -6s^2 +3s +3t,\\
f_3 = -3s^3t^3 -3s^3t^2 +15s^2t^3 +6s^3t -18s^2t^2 -15st^3 +9s^2t +27st^2 -3s^2   \\
\qquad -18st -3t^2 +3s +3t
\end{array}
$$
\end{example}
%%%%%%%%%%%%%%%%%%%%%%%%%%%%%
%%%%%%%%%%%%%%%%%%%%%%%%%%%%%%
\medskip

\begin{example}\label{Dic1}
({\sc \cite{BD}, Example 5.1})\quad
In the field $K(s,t)$ we let
$$
\begin{array}{l}
f_1= \frac{st^6_{\mathstrut} +2_{\mathstrut}}{2 +{s^2t^6}^{\mathstrut}},\quad 
f_2 =  \frac{st^5 -3st^3_{\mathstrut}}{2 +{s^2t^6}^{\mathstrut}},\quad
f_3 = \frac{st^4_{\mathstrut} +5s^2t^6}{2 +{s^2t^6}^{\mathstrut}}
\end{array}
$$
\end{example}

\begin{example}\label{Dic2}
({\sc \cite{BD}, Example 5.2})\quad
In the field $K(s,t)$ we let
$$
\begin{array}{l}
f_1= \frac{{s^7+s^{47}}_{\mathstrut}}{1+st+s^{37}},\quad
f_2= \frac{{s^{37}+s^{59}}_{\mathstrut}}{1+st+s^{37}},\quad
f_3= \frac{{s^{61}}_{\mathstrut}}{1+st+s^{37}}
\end{array}
$$
\end{example}

\begin{example}\label{Dic3}
({\sc \cite{BD}, Example 5.3})\quad
In the field $K(s,t)$ we let
$$
\begin{array}{l}
f_1= \frac{{-s^{36}t +1}_{\mathstrut}}{1+st},\quad
f_2= \frac{{-t\,(-s^{38} +t)}_{\mathstrut}}{1+st},\quad
f_3= \frac{{s^{37}-t}_{\mathstrut}}{1+st}
\end{array}
$$
\end{example}

\begin{example}\label{Dic4}
({\sc \cite{BD}, Example 5.4})\quad
In the field $K(s_1,s_2)$ we let
$$
\begin{array}{l}
f_1=  {\frac{{3s_1^2s_2}_{\mathstrut} - s_1^2 - 3s_1s_2 - s_1 + s_2 + s_1^2 + s_2^2 +s_1^2s_2^2}
{{3s_1^2s_2}^{\mathstrut} - 2s_1s_2^2 - s_1^2 + s_1s_2 - 3s_1 - s_2 + 4 -s_2^2}}_{\mathstrut},\quad 
f_2 = {\frac{{2s_1^2s_2^2}_{\mathstrut} - 3s_1^2s_2 - s_1^2 + s_1s_2 + 3s_1 - 3s_2 + 2 - s_2^2}
{{3s_1^2s_2}^{\mathstrut} - 2s_1s_2^2 - s_1^2 + s_1s_2 - 3s_1 - s_2 + 4 -s_2^2}}^{\mathstrut},\\
\\
f_3 = \frac{{2s_1^2s_2^2}_{\mathstrut} - 3s_1^2s_2 - 2s_1s_2^2 + s_1^2 + 5s_1s_2 - 3s_1 -3s_2 + 4 - s_2^2}
{{3s_1^2s_2 }^{\mathstrut}- 2s_1s_2^2 - s_1^2 + s_1s_2 - 3s_1 - s_2 + 4 -s_2^2}
\end{array}
$$
\end{example}

\begin{example}\label{Bohemian}
({\sc \cite{EKZ}, Example 3 and \cite{EKK} Table 4, Bohemian Dome})\quad
In the field $K(s,t)$ we let
$$
\begin{array}{l}
f_1= \frac{1 -t^2_{\mathstrut}}{1 +{t^2}^{\mathstrut}},\quad f_2 = 
\frac{1 +2t +t^2 -s^2 -s^2t^2 +2ts^2_{\mathstrut}}{(1 +{t^2}^{\mathstrut})(1 +s^2)},\quad 
f_3 = \frac{2s_{\mathstrut}}{1 +{s^2}^{\mathstrut}}
\end{array}
$$
\end{example}

\begin{example}\label{Sine}
({\sc \cite{EKK}, Table 4, Sine Surface})\quad
In the field $K(s,t)$ we let
$$
\begin{array}{l}
f_1= \frac{2t_{\mathstrut}}{1 +{t^2}^{\mathstrut}},\quad f_2 =
 \frac{2s_{\mathstrut}}{(1 +{s^2}^{\mathstrut})},\quad 
f_3 = \frac{2s_{\mathstrut}}{1 +{s^2}^{\mathstrut}} \frac{1 -t^2_{\mathstrut}}{1 +{t^2}^{\mathstrut}} 
+ \frac{2t_{\mathstrut}}{1 +{t^2}^{\mathstrut}} \frac{1 -s^2_{\mathstrut}}{1 +{s^2}^{\mathstrut}} 
\end{array}
$$
\end{example}

\bigskip

\subsection{Our Own Examples}
\label{OurExamples}

Here are several examples we used while exploring the behaviour of our algorithms.

\begin{example}\label{ex12ABR-Poly}
In the polynomial ring $K[t]$ we let
$$
\begin{array}{l}
f_1= t^{15} -3t^2 -t +1, \quad f_2= t^{23} +t^{11} +t^3 -t -2
\end{array}
$$
\end{example}
%%%%%%%%%%%%%%%%%%%%%%%

\begin{example}\label{ex13ABR-Poly}
In the polynomial ring $K[s,t]$ we let
$$
\begin{array}{l}
f_1= st^5 -st^3 -t, \quad f_2= s^3 -st -t^2 -1,\quad f_3= s^2t^2 -s
\end{array}
$$
\end{example}
%%%%%%%%%%%%%%%%%%%%%%%

\begin{example}\label{ex14ABR-Poly}
In the polynomial ring $K[s,t]$ we let
$$
\begin{array}{l}
f_1= s^7 -st^3 -t, \quad f_2= st^3 -s,\quad  f_3=  s^{13} -t^2
\end{array}
$$
\end{example}
%%%%%%%%%%%%%%%%%%%%%%%

\begin{example}\label{ex15ABR-Poly}
In the polynomial ring $K[s,t,u]$ we let
$$
\begin{array}{l}
f_1=  s^2 -st -tu , \quad f_2=  st^2 -su, \quad f_3= s^3 -t^2 +u,\quad f_4 = s +u^2
\end{array}
$$
\end{example}
%%%%%%%%%%%%%%%%%%%%%%%

\begin{example}\label{ex16ABR-Poly}
In the polynomial ring $K[s,t,u]$ we let
$$
\begin{array}{l}
f_1= s^3 -t^2 +u,\quad f_2= s^3 -t^2 +u^2 +s +u,\quad f_3= s^5 -tu,\quad f_4=  st^2 -su
\end{array}
$$
\end{example}
%%%%%%%%%%%%%%%%%%%%%%%

\begin{example}\label{ex17ABR-Poly}
In the polynomial ring $K[s,t,u,w]$ we let
$$
\begin{array}{l}
f_1 = s^2 -t^2 +w,\  f_2= s^2 -u -w,\  f_3= s^2 -tu,\  f_4=  t^2 -su,\  f_5= s^3 + t -u -w
\end{array}
$$
\end{example}
%%%%%%%%%%%%%%%%%%%%%%%

\begin{example}\label{ex18ABR-Poly}
In the polynomial ring $K[s,t,u,w]$ we let
$$
\begin{array}{l}
f_1 = s^2 -t -u +w,\  f_2= t^2 -u -w,\  f_3= s -tu,\ f_4=  u^2 -sw,\  f_5 =\! s^2 + t -u -w^2
\end{array}
$$
\end{example}
%%%%%%%%%%%%%%%%%%%%%%%

\begin{example}\label{ex19ABR-Poly}
In the polynomial ring $K[s,t,u,v, w]$ we let
$$
\begin{array}{l}
f_1= s^2 -t -u,\  f_2 = u^2 -sw,\  f_3 = \! s^2 -v,\  
f_4 =  u^2 -v -w,\  f_5 = t -u^2,\   f_6 =\!  v^2 -w
\end{array}
$$
\end{example}
%%%%%%%%%%%%%%%%%%%%%%%

\begin{example}\label{ex20ABR-Poly}
In the polynomial ring $K[s,t,u,v, w]$ we let
$$
\begin{array}{l}
f_1= s^3 -u^2 -t -3s -u +w,\quad f_2 = u^2 -sw -11,\quad f_3 =  s^2 -5u -v,\\
f_4 =  u^2 - s -v -w,\quad f_5 = u^2 +7s +t,\quad f_6 =\!  v^2 +s^2 -s -t -w
\end{array}
$$
\end{example}
%%%%%%%%%%%%%%%%%%%%%%%

\begin{example}\label{ex21ABR-Poly}
In the polynomial ring $K[s,t,u]$ we let
$$
\begin{array}{l}
f_1= s^3 -s^2 -t^2 +3s -21,\quad f_2 = t^5 -s^5 +st^4 -st +t^2 -s -t -21,\\
f_3 =  t^3 -2t^2s -5ts^2 -t^2 +5s -12u, \quad f_4 = s +t -u
\end{array}
$$
\end{example}
%%%%%%%%%%%%%%%%%%%%%%%
%%%%%%%%%%%%%%%%%%%%%%%
%%%%%%%%%%%%%%%%%%%%%%%

\medskip
\begin{example}\label{ex1ABR-RatFun}
In the field $K(t)$ we let
$$
\begin{array}{l}
f_1=\frac{ 2t^2 -t -3_{\mathstrut}}{ 1 +{t^{17}}^{\mathstrut}} \quad 
f_2=\frac{ t^4 -t +1_{\mathstrut}}{ {t^2}^{\mathstrut} -t -1}
\end{array}
$$
\end{example}
%%%%%%%%%%%%%%%%%%%%%%%

\begin{example}\label{ex2ABR-RatFun}
In the field $K(s,t)$ we let
$$
\begin{array}{l}
f_1=\frac{ s^3 -t_{\mathstrut}}{{t^2}^{\mathstrut} -s -t }, \quad 
f_2=\frac{s -t_{\mathstrut} }{{s^3}^{\mathstrut} -2 },\quad
f_3 =\frac{ s_{\mathstrut}}{ {s^2}^{\mathstrut} +t}
\end{array}
$$
\end{example}
%%%%%%%%%%%%%%%%%%%%%%%

\begin{example}\label{ex3ABR-RatFun}
In the field $K(s,t)$ we let
$$
\begin{array}{l}
f_1=\frac{s^2_{\mathstrut} -t^2 -s}{ {t^2}^{\mathstrut} -s -t}, \quad 
f_2=\frac{s -t -4_{\mathstrut} }{ {s^3}^{\mathstrut} -2t -5},\quad
f_3 =\frac{s -2_{\mathstrut}}{ {s^2}^{\mathstrut} +t}
\end{array}
$$
\end{example}
%%%%%%%%%%%%%%%%%%%%%%%

\begin{example}\label{ex4ABR-RatFun}
In the field $K(t_1,t_2,t_3)$ we let
$$
\begin{array}{l}
f_1=\frac{t_1t_3-{t_2^2}_{\mathstrut}}{t_2-{t_3}^{\mathstrut}}, \quad 
f_2=\frac{-t_2+t_3-4_{\mathstrut}}{t_1-2t_2-5^{\mathstrut}},\quad
f_3 = \frac{t_1-2_{\mathstrut}}{{t_1^2}^{\mathstrut}+t_3}, \quad 
f_4 = \frac{{t_3}_{\mathstrut}}{t_1+t_3^{\mathstrut}}
\end{array}
$$
\end{example}
%%%%%%%%%%%%%%%%%%%%%%%

\begin{example}\label{ex5ABR-RatFun}
In the field $K(s,t,u)$ we let
$$
\begin{array}{l}
f_1=\frac{s^3 -t -u_{\mathstrut}}{ {t^2}^{\mathstrut} -s -t}, \quad 
f_2=\frac{t -u_{\mathstrut} }{ {s^3}^{\mathstrut}-2},\quad
f_3 =\frac{s^2_{\mathstrut} }{{s^2}^{\mathstrut} +t}, \quad 
f_4 = \frac{u_{\mathstrut}}{{s^2}^{\mathstrut} +t }
\end{array}
$$
\end{example}
%%%%%%%%%%%%%%%%%%%%%%%

\begin{example}\label{ex6ABR-RatFun}
In the field $K(s,t,u,v)$ we let 
$$
\begin{array}{l}
f_1=\frac{ s -t -u_{\mathstrut}}{(t^2-s-t -u -v)(s+u) }, \quad 
f_2=\frac{ t^3 -v^2_{\mathstrut}}{ ({t^2}^{\mathstrut}-s-t -u -v)(s+u)},\quad
f_3 =\frac{u -v _{\mathstrut}}{ ({t^2}^{\mathstrut}-s-t -u -v)(s+u)}, \\
f_4 = \frac{s +u -v_{\mathstrut}}{(({t^2}^{\mathstrut}-s-t -u -v)(s+u)},  
\quad {f_5 = \frac{s^2 -5u -6v_{\mathstrut}}{({t^2}^{\mathstrut}-s-t -u -v)(s+u)}}^{\mathstrut}
\end{array}
$$
\end{example}
%%%%%%%%%%%%%%%%%%%%%%%

\goodbreak

\end{document}